\documentclass{amsart}

%
\usepackage{amssymb,amsbsy}
\usepackage{amsrefs}

\vfuzz2pt 
\hfuzz2pt 
%
\newtheorem{theorem}{Theorem}[section]
\newtheorem{lemma}[theorem]{Lemma}
\newtheorem{proposition}[theorem]{Proposition}
\newtheorem{corollary}[theorem]{Corollary}

\theoremstyle{definition}
\newtheorem{definition}[theorem]{Definition}

\theoremstyle{remark}

\numberwithin{equation}{section}
\newtheorem*{theorem*}{Theorem}

\newcommand{\1}{_{(1)}}
\newcommand{\2}{_{(2)}}
\newcommand{\3}{_{(3)}}
\newcommand{\4}{_{(4)}}

\newcommand{\basisB}{\mathcal{B}}
\newcommand{\fc}{\mathfrak c }
\newcommand{\fg}{\mathfrak g}
\newcommand{\fh}{\mathfrak h}
\newcommand{\fm}{\mathfrak m}
\newcommand{\fn}{\mathfrak n}
\newcommand{\fp}{\mathfrak p}
\newcommand{\fs}{\mathfrak s}
\newcommand{\field}{\boldsymbol{k}}
\newcommand{\R}{\mathbb R}
\newcommand{\T}{T}
\newcommand{\sinn}{\ensuremath{_{\rm inn}}}
\newcommand{\sred}{\ensuremath{_{\rm red}}}
\newcommand{\supopp}{\ensuremath{^{\rm opp}}}

\DeclareMathOperator{\ad}{ad}
\DeclareMathOperator{\Ad}{Ad}
\DeclareMathOperator{\core}{core}
\DeclareMathOperator{\Endo}{End}
\DeclareMathOperator{\gl}{\mathfrak{gl}}
\DeclareMathOperator{\Id}{\mathrm{Id}}
\DeclareMathOperator{\Ima}{lm}
\DeclareMathOperator{\Lie}{Lie}
\DeclareMathOperator{\Prim}{Prim}
\DeclareMathOperator{\spann}{span}
\font\cyrillic=wncyi10
\newcommand{\SU}{\mathop{\hbox{{\cyrillic UX}}}}

\author{J. M. P\'erez-Izquierdo}
\address{Departamento de Matem\'aticas y Computaci\'on, Universidad de
La Rioja, 26004 \\ Lo\-gro\-\~no, Spain}
\email{jm.perez@unirioja.es}
\thanks{J.M. P\'erez-Izquierdo  thanks support from the Spanish Ministerio de Ciencia e Innovaci\'on (MTM2013-45588-C3-3-P) and the Programa hispano--brasile\~no de cooperaci\'on interuniversitario (PHBP14/00110).}%

\keywords{Non-associative Hopf algebras, Sabinin algebras, loops, hyporeductive, pseudoreductive}
\subjclass[2010]{20N05, 17D99}
%
%
\begin{document}
\title{Hyporeductive and pseudoreductive Hopf algebras}
\begin{abstract}
 In his generalization of reductive homogeneous spaces, Lev Sabinin showed that Lie's fundamental theorems hold for local analytic hyporeductive and pseudoreductive  loops.  We derive Sabinin's results in an algebraic context in terms of  non-associative Hopf algebras that satisfy the analog of the hyporeductive and pseudoreductive identities for loops.
\end{abstract}
\maketitle
%
%
\section{Introduction}
The knowledge of homogeneous spaces firmly relays on their infinitesimal analysis. Symmetric  and reductive homogeneous spaces are archetypal examples. In 1954 Nomizu \cite{No54} studied invariant affine connections on reductive homogeneous spaces $G/H$ under this premise. In this context, \emph{reductive} means that the Lie tangent algebra $\fg$ of the connected Lie group $G$ splits as $\fg = \fh \oplus \fm$, where $\fh$ is the Lie algebra of the closed subgroup $H$, and $\fm$ is a complement satisfying $\Ad_H(\fm) \subseteq \fm$. Nomizu proved that invariant affine connections on these manifolds are in bijection with certain non-associative products on $\fm$ admitting $\Ad_H$ as automorphisms. Later, in \cite{Ya58}, Yamaguti introduced the notion of general Lie triple system as an abstract model for the complement $\fm$. Other names for these triple systems are Lie triple algebras  \cite{Ki75} and Lie-Yamaguti algebras \cite{Kin01}. The importance of the local study of homogeneous spaces was beyond doubt, and it was confirmed in the landmark paper \cite{Wo68} where Wolf described the geometry and structure of isotropy irreducible homogeneous spaces through heavy and extensive computations based on the representation theory of Lie algebras. A related, but different, approach to this classification was obtained recently in the spirit of coordinatizing Lie algebras by non-associative structures \cites{BeElMa09,BeElMa11}. 

A new approach to the local study of affine manifolds was proposed in \cite{Ki64} by Kikkawa: around any point $e$ of such a manifold $M$, there exists a locally defined product
\begin{equation}\label{eq:geodesic_sum}
xy:= \exp_x(\tau^e_x(\exp^{-1}_e(y)))
\end{equation} 
where $\tau^e_x$ denotes the parallel transportation from $e$ to $x$. Thus, it is possible to classify affine connections according to the algebraic properties of these products. For instance, around $\bar{e} := eH \in G/H$ ($e$ denotes the identity element of $G$) the reductive property of $G/H$ implies
\begin{equation}\label{eq:automorphisms}
h(\bar{x}\bar{y}) = (h\bar{x}) (h\bar{y})
\end{equation}
for all $h \in H$ and $\bar{x}:= xH, \bar{y}:= yH \in G/H$, where $G/H$ is equipped with its canonical connection \cite{No96}. This new approach was very much related to the  beautiful interplay among algebra and geometry in the study of symmetric spaces by Loos \cite{Lo69}--who, by the way, also discussed the idea of using non-associative Hopf algebras to understand certain products on manifolds--and it was sharpened by Sabinin in \cite{Sa77}. Sabinin devised a program to model affinely connected spaces in terms of algebraic structures, which also allowed him to consider discrete spaces. 

In the work of Sabinin, \emph{loops} (non-associative groups to say) play an important role. For instance, the geodesic sum defined in (\ref{eq:geodesic_sum}) is a (partial) loop. Thus, it was apparent that a Lie theory for loops was required. This theory finally appeared in \cite{SaMi87}, where tangent algebras of analytic loops (now called Sabinin algebras) were axiomatized, and Lie's fundamental theorems were proved in the non-associative case. After the strong relations of loops and affine connections, now it is not a surprise that Sabinin algebras were defined modeled on the torsion of flat affine connections.

The approach to affine connections in terms of loops has certain benefits. For instance, from this point of view (\ref{eq:automorphisms}) can be naturally generalized since not only the groups of automorphisms of loops are of interest. An \emph{autotopism} is a triple $(f_0,f_1,f_2)$ of invertible maps satisfying $f_0(xy) = f_1(x)f_2(y)$ for any elements $x$ and $y$ of the loop. Automorphisms correspond to autotopisms of the form $(f,f,f)$. Thus, instead of imposing that the elements of $H$ act as automorphisms of the geodesic sum, it is  natural to impose that they appear as components of  autotopisms. This motivated the introduction, among others, of \emph{hyporeductive} and \emph{pseudoreductive} loops by Sabinin as a generalization of geodesic sums on homogeneous reductive spaces. The Lie theory of hyporeductive loops appeared in \cites{Sa90a,Sa90b,Sa91}, while the correspoing theory for pseudoreductive loops waited until \cite{Sa96} (see \cite{Is95} for the description of the associated affine connections).

After decades of work on analytic loops with tools from differential geometry, tangent algebras of analytic loops were finally put into algebraic ground by Shestakov and Umirbaev in their landmark study of primitive elements of non-associative bialgebras  \cite{SU02}. In \cite{MoPe10} the geometric approach by Sabinin and the algebraic approach by Shestakov and Umirbaev were conciliated by means of non-associative Hopf algebras, as predicted by Loos forty years earlier. 

The use of non-associative Hopf algebras in the treatment of loops is, in our opinion, indeed valuable. For instance, a non-associative version of the Baker-Campbell-Haussdorf formula has been recently found \cite{MoPeSh16a}. The freeness of the loop of formal power series, with coefficient $1$, in non-associative variables has been proved with the help of non-associative Hopf algebras \cite{MoPeSh16b}. A Lie theory for commutative automorphic loops has also been obtained with these techniques \cite{GrPe17}, while no similar approach from differential geometry was available. Thus, the introduction of new algebraic techniques in this field makes it more accessible to other researches.

In this paper we will transport Sabinin's results on the Lie theory of hyporeductive and pseudoreductive loops to an algebraic setting. To that end, in sections \ref{sec:hHa} and \ref{sec:pHa} we will define hyporeductive and pseudoreductive Hopf algebras, we will describe the space of primitive elements in terms of some triples $(\fg, \fs, \fc)$ composed of a Lie algebra $\fg$, a Lie subalgebra $\fs$ and a complement $\fc$ satisfying certain properties, and finally we will show how to formally integrate these triples to recover hyporeductive and pseudoreductive Hopf algebras. Preliminaries on non-associative Hopf algebras and Sabinin algebras are included in Section~\ref{sec:prel}, while the general result on the formal integration of triples is proved in Section~\ref{sec:maHa}. We hope this paper will contribute to motivate classifications similar to those in \cite{Wo68}.

\subsection*{Notation} Given a Lie algebra $\fg$ and a subset $\fc \subseteq \fg$, the Lie subalgebra of $\fg$ generated by $\fc$ will be denoted by $\Lie_{\fg}\langle \fc \rangle$.  The subalgebra $N_{\fg}(\fc) := \{ x \in \fg \mid [x,\fc] \subseteq \fc\}$ will appear frequently. For a subalgebra $\fs$ of $\fg$, $\core_{\fg}(\fs)$ is, by definition, the largest ideal of $\fg$ contained in $\fs$. If there is no likelihood of confusion, we will write $N(\fs)$ instead of $N_{\fg}(\fs)$. The symmetric group of degree $n$ will be denoted by $S_n$. Since almost all algebras in this paper lack associativity, an order for parentheses in powers is mandatory. We will stick to the convention $c^n:= ((cc)\cdots )c$. Finally, all bialgebras in this paper are assumed to be coassociative, cocommutative and unital, and the characteristic of the base field $\field$ is zero.

%
\section{Preliminaries}
\label{sec:prel}

In this section we will review some basic results and definitions required in the present work. See \cite{MoPeSh14} for an expository paper on non-associative Hopf algebras.

\subsection{Loops.} A \emph{loop} $(Q,xy,e)$ is a non-empty set $Q$ endowed with:
\begin{itemize}
	\item a product $xy$ for which the left and right multiplication operators $L_x \colon y \mapsto xy$ and $R_x \colon y \mapsto yx$ are bijective for all $x \in Q$, and
	\item a distinguished element $e \in Q$, called the \emph{unit element} (or \emph{identity element}), such that $ex = x = xe$ for all $x \in Q$.
\end{itemize}
Since associativity is not required in the definition, this algebraic structure is usually thought as the non-associative counterpart of groups. The inverse map for groups is replaced in loop theory by the \emph{left} and \emph{right divisions}
\begin{displaymath}
	x \backslash y := L^{-1}_x(y) \quad \text{and} \quad x/y := R^{-1}_y(x).
\end{displaymath} 
Clearly
\begin{displaymath}
x \backslash (xy) = y = x(x \backslash y) \quad \text{and} \quad (xy)/y = x = (x/y)y.
\end{displaymath}

\subsection{Formal loops.} Differential manifolds with affine connections have traditionally been a natural source of loops. Given such a manifold $M$ and a point $e \in M$, then the geodesic sum
\begin{displaymath}
xy := \exp_x(\tau^{e}_{x} (\exp^{-1}_e(y))),
\end{displaymath}
where $\tau^e_x$ stands for the parallel transport from $e$ to $x$, defines a binary operation $U \times U \to M$ on a neighborhood $U$ of $e$. The point $e$ is the unit element of this operation, and the left and right multiplication operators by $x$ are invertible when restricted to 
 small neighborhoods of $e$. In case  $(x,y) \mapsto xy$ is analytic and we only study this product around $e$, we can assume $M = \R^n$, $e = (0,\dots, 0)$ and $xy = (p_1(x,y),\dots, p_n(x,y))$ where $p_i(x,y)$  are power series ($i=1,\dots, n$). The algebra of formal power series on $x_1,\dots, x_n$ can be identified with the dual space of the \emph{symmetric algebra} $\R[V]$ on $V:=\R^n$. Under this identifications, $xy$ is nothing else but a linear map $p \colon \R[V] \otimes \R[V] \to V$ for which $p(u \otimes 1)$ and $p(1 \otimes u)$ are both equal to the projection of $u$ onto $V$.

In an algebraic setting the convergence of the series $p_1(x,y),\dots,p_n(x,y)$ is relaxed, and analytic loops are replaced by formal loops. Given a vector space $V$ over a field $\field$, a \emph{formal loop} (over $V$) is a linear map $F\colon \field[V] \otimes \field[V] \to V$ such that \begin{displaymath}
F\vert_{\field[V] \otimes 1} = \pi_V = F\vert_{1 \otimes \field[V]}
\end{displaymath}
where $\field[V]$ stands for the symmetric $\field$-algebra on $V$, and $\pi_V$ denotes the projection of $\field[V]$ onto $V$ that kills homogeneous elements of degree $\neq 1$.

\subsection{Nonassociative connected bialgebras.} 
The category of formal loops over a field $\field$ of characteristic zero is equivalent to the category of connected non-associative bialgebras \cite{MoPe10}. Any formal loop $F$ defines a coalgebra morphism $F' := \exp^*(F)$ by
\begin{eqnarray*}
F' \colon \field[V] \otimes \field[V] & \rightarrow & \field[V] \\
u \otimes v & \mapsto & uv := \sum_{n=0}^{\infty} \frac{1}{n!} F(u\1 \otimes v\1) \circ \cdots \circ F(u_{(n)} \otimes v_{(n)})
\end{eqnarray*}
where the product $\circ$ in the formula is the usual commutative and associative product on $\field[V]$, that we will no use anymore. The \emph{comultiplication} $\Delta$ is the homomorphism of unital commutative and associative algebras $\Delta \colon \field[V] \rightarrow \field[V] \otimes \field[V]$ determined by $\Delta(a) = a \otimes 1 + 1 \otimes a$ for all $a \in V$. Sweedler's sigma notation for $\Delta(u)$ is $\Delta(u) = \sum u\1 \otimes u\2$ or, with the Einstein convention,  just $\Delta(u) = u\1 \otimes u\2$. The \emph{counit} $\epsilon \colon \field[V] \rightarrow \field$ is analogously determined by $\epsilon(a) = 0$ for all $a \in V$. Endowed with the new product $uv := F'(u \otimes v)$ and the unit $\eta \colon \field \rightarrow \field[V]$ ($\xi \mapsto \xi 1$) we obtain a non-associative connected  bialgebra $(\field[V], \Delta, \epsilon, F', \eta)$. The formal loop is recovered as $F = \pi_V F'$.

\subsection{Left and right divisions. Non-associative Hopf algebras} Associative connected bialgebras are Hopf algebras, i.e. for any such bialgebra there exists a map, the \emph{antipode}, related to the bialgebra structure by $S(u\1)u\2 = \epsilon(u)1 = u\1 S(u\2)$ for all $u$. This useful map is no longer available in the non-associative setting. However, the existence of two maps (the \emph{left} and the \emph{right division}) $\backslash, / \colon \field[V] \otimes \field[V] \rightarrow \field[V]$ satisfying
\begin{equation}\label{eq:divisions}
\begin{split}
u\1 \backslash (u\2 v) &= \epsilon(u) v = u\1 (u\2 \backslash v) \quad \text{and}\\
 \quad (u v\1)/v\2 &= \epsilon(v)u = (u / v\1) v\2
 \end{split}
\end{equation}
can be proved  for any non-associative connected bialgebra \cite{Pe07}. These are the non-associative counterpart of the antipode $S$. In the associative case one has $u \backslash v = S(u)v$ and $u/v = u S(v)$. In this paper we will only consider \emph{coassociative}  (i.e $u_{(1)(1)} \otimes u_{(1)(2)} \otimes u_{(2)} = u_{(1)} \otimes u_{(2)(1)} \otimes u_{(2)(2)}$)  and \emph{cocommutative} (i.e. $u\1 \otimes u\2 = u\2 \otimes u\1$) coalgebras. In this setting, a \emph{non-associative Hopf algebra} is a unital bialgebra with a left and a right division satisfying (\ref{eq:divisions}).  The adjective \emph{connected} means that the Hopf algebra is isomorphic to $(\field[V], \Delta, \epsilon, uv, \eta, \backslash, /)$ for some space $V$ (the space of primitive elements).

\subsection{Primitive elements and Sabinin algebras.} $V$ can be thought as the tangent space at the identity element of the formal loop $(\field[V],F)$. In terms of the comultiplication, $V$ is the nothing else but the vector space of \emph{primitive elements}, $\Prim \field[V]$, of $\field[V]$, i.e. those elements satisfying
\begin{displaymath}
\Delta(a) = a \otimes 1 + 1 \otimes a.
\end{displaymath}
The space of primitive elements is not closed under the product $uv$. However, many multilinear maps  for which $V$ is stable, such as the commutator $[u,v] := uv - vu$ or the associator $(u,v,w):=(uv)w-u(vw)$, can be obtained out of the binary product $uv$. The precise algebraic structure of $\Prim \field[V]$ was determined by Shestakov and Umirbaev \cite{SU02}. Later, in \cite{Pe07},  it was named \emph{Sabinin algebra} since it was first introduced by Sabinin and Mikheev as the right algebraic structure on the tangent space at the identity element to classify local loops. A Sabinin algebra is a rather complex algebraic structure endowed with two infinite families of multilinear operations satisfying certain axioms that generalize those of Lie algebras.

\subsection{Shestakov-Umirbaev operations}\label{subsec:SU_operations} Let  $X:= \{ x_1,x_2,\dots,  y, y_1,y_2,\dots, z\}$ be a set of symbols, and let $\field\{X\}$ be the free unital non-associative algebra generated by $X$. Define homomorphisms of unital algebras $\Delta \colon \field\{X\} \to \field\{X\} \otimes \field\{X\}$ and $\epsilon \colon \field\{ X \}\to \field$ determined by $\Delta(a) = a \otimes 1 + 1 \otimes a$ and $\epsilon(a) = 0$ for any $a \in X$. Consider $u:= ((x_1x_2)\cdots)x_m$, $v:= ((y_1y_2)\cdots)y_{n}$,  $\underline{u}:= x_1 \otimes \cdots \otimes x_m$ and $\underline{v}:= y_1 \otimes \cdots \otimes y_n$. The element $\Delta(u) = u\1 \otimes u\2$ has been defined, and the notation $\Delta(\underline{u}) = \underline{u}\1  \otimes \underline{u}\2$ has the obvious meaning. We can recursively define intermediate elements $p(\underline{u};\underline{v};z) = p(x_1,\dots, x_{m}; y_1,\dots, y_{n};z)$ from the associator $(u,v,w) := (uv)w-u(vw)$ by
\begin{equation*}
(u,v,z) = (u\1 v\1)p(\underline{u}\2;\underline{v}\2;z)
\end{equation*}
to obtain elements $\langle x_1, \dots, x_m; y,z\rangle$ and $\Phi(x_1,\dots, x_m; y_1,\dots, y_{n};y_{n+1})$ ($m, n \geq 1$) by 
\begin{equation*}
\begin{split}
& \langle 1; y,z \rangle := -[y,z], \\
& \langle x_1,\dots, x_m; y,z \rangle := -p(\underline{u};y;z) + p(\underline{u};z;y) \quad \text{and}\\
&\Phi(x_1,\dots, x_n; y_{1},\dots, y_{m};y_{m+1}) := \\
& \quad \frac{1}{n! (m+1)!} \sum_{\sigma \in S_n, \tau \in S_{m+1}}  p(x_{\sigma(1)}, x_{\sigma(2)},\dots, x_{\sigma(n)}; y_{\tau(1)}, y_{\tau(2)}, \dots, y_{\tau(m)};  y_{\tau(m+1)} )
\end{split}
\end{equation*}
where $S_k$ is the symmetric group or degree $k$. These non-associative polynomials  $\langle x_1, \dots, x_n; y,z\rangle$ and $\Phi(x_1,\dots, x_n; y_1,\dots, y_{m};y_{m+1})$ can be evaluated on any non-associative algebra $H$, thus they define multilinear maps on $H$ called \emph{Shestakov-Umirbaev operations}. With these new operations $H$ becomes a Sabinin algebra denoted by $\SU(H)$, and in case $H$ is a non-associative Hopf algebra then $\Prim(H)$ is Sabinin subalgebra of $\SU(H)$. In Section~\ref{subsec:Sabinin} we will use this construction.
 
\subsection{Formal integration.} Formal loops are equivalent to non-associative connected Hopf algebras. Thus, the formal integration of a Sabinin algebra to a formal loop amounts to constructing such a Hopf algebra out of a given Sabinin algebra. The only requirement for this construction is that the Sabinin algebra structure is recovered from the Hopf algebra as the space of primitive elements endowed with the Shestakov-Umirbaev operations. For instance, the formal integration of a Lie algebra corresponds essentially to the construction of its universal enveloping algebra and the proof of the Poincar\'e-Birkhoff-Witt Theorem. 

In the approach to hyporeductive and pseudoreductive local loops, triples consisting of a Lie algebra, a subalgebra and a complement of this subalgebra overshadow Sabinin algebras. In fact, all multilinear operations conforming the Sabinin algebra structure of the tangent space at the identity element of any of these loops can be obtained from just three products of arieties $2,2$ and $3$ derived from a certain triple related to the fundamental vector fields of an adequate affine connection; hence, no need for the whole power of Sabinin algebras is required. We  adopt this point of view in this paper, and we formally integrate triples rather than Sabinin algebras. 

%
\section{Right monoalternative Hopf algebras.}
\label{sec:maHa}

\subsection{Triples} Let us consider a category whose objects are \emph{triples} $\tau = (\fg, \fs, \fc)$ where $\fg$ is a Lie algebra, $\fs$ is a subalgebra of $\fg$, and $\fc$ is a subspace such that $\fg = \fs \oplus \fc$. A morphism $\varphi \colon \tau \to \tau'$ is a homomorphism of Lie algebras $\varphi \colon \fg \to \fg'$ satisfying $\varphi(\fs) \subseteq \fs'$ and $\varphi(\fc) \subseteq \fc'$. Associated to any triple $\tau$ we have two more triples $\tau\sinn$ and 
$\tau\sred$ defined by $\tau\sinn = (\fg\sinn, \fs\sinn, \fc)$, where $\fg\sinn := \Lie_{\fg}\langle \fc \rangle$, $ \fs\sinn := \fg\sinn \cap \fs$ and $\tau\sred := (\fg\sinn/\core(\fs\sinn), \fs\sinn / \core(\fs\sinn), \fc)$\footnote{$\core(\fs\sinn):=\core_{\fg\sinn}(\fs\sinn)$.}. We will say that the triples $\tau$ and $\tau'$ are \emph{equivalent} if $\tau\sred$ is isomorphic to $\tau'\sred$.

\subsection{Triples from bialgebras} Any unital bialgebra $U$ determines a triple 
\begin{equation*}
\T(U):= (\fg(U), \fs(U), \fc(U))
\end{equation*}
where 
\begin{align*}
	 \fg(U):= & \{ f \in \Endo_{\field}(U) \mid \Delta(f(u)) = f(u\1) \otimes u\2 + u\1 \otimes f(u\2)\,\, \forall_{u \in U}\}, \\
	 \fs(U) := & \{ f \in \fg(U) \mid f(1) = 0 \} \quad\text{and} \\ 
	 \fc(U) := & \{ R_a \mid a \in \Prim(U)\}.
\end{align*}
\begin{lemma}
Let $U$ be a unital connected bialgebra. Then, $\T(U)\sinn = \T(U)\sred$.
\end{lemma}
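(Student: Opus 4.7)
My plan is to show directly that $\core(\fs\sinn) = 0$, since the two triples $\T(U)\sinn$ and $\T(U)\sred$ differ only by the quotient by this ideal. So I let $I$ be any ideal of $\fg\sinn$ contained in $\fs\sinn$, pick $f \in I$, and aim to show $f = 0$.

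The starting observation is that $f(1) = 0$, since $f \in \fs(U)$. Because $I$ is an ideal of $\fg\sinn$, it is stable under $\ad(R_a)$ for every $a \in \Prim(U)$. From $[R_a, g] = R_a g - g R_a$ one obtains the basic identity $g(u \cdot a) = g(u)\cdot a - [R_a, g](u)$, valid for any linear map $g \colon U \to U$; evaluating at $u = 1$ yields $-f(a) = [R_a, f](1) = 0$, since $[R_a, f] \in I \subseteq \fs(U)$. Hence $f$ already annihilates $\Prim(U)$.

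The core idea is to iterate. For any $a_1, \ldots, a_k \in \Prim(U)$ the iterated bracket $[R_{a_1}, [R_{a_2}, \ldots, [R_{a_k}, f]\ldots]]$ is again in $I$, and therefore kills $1$. I would then prove by induction on $n$: for every $k \geq 0$ and every $a_1,\ldots,a_k \in \Prim(U)$, that iterated bracket vanishes on every left-iterated product $(\cdots ((b_1 b_2) b_3) \cdots) b_n$ of primitive elements. The inductive step applies the basic identity with $u$ of length $n-1$, invoking the induction hypothesis twice---once to annihilate $g(u)$, once to annihilate $[R_{b_n}, g](u)$, where $g$ is the length-$k$ iterated bracket. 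Taking $k = 0$ then yields $f(u) = 0$ for every left-iterated product $u$ of primitive elements.

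The final step---where I expect the main obstacle---is to show that these left-iterated products actually span $U$. For this I would use that $U$, as a cocommutative connected coalgebra over a field of characteristic zero, is isomorphic as a coalgebra to $\field[\Prim(U)]$, so the coradical filtration satisfies $U_n / U_{n-1} \cong \mathrm{Sym}^n(\Prim(U))$. Computing the reduced coproduct of $(\cdots(b_1 b_2)\cdots) b_n$ via the fact that $\Delta$ is a (non-associative) algebra homomorphism shows that its image in $U_n / U_{n-1}$ is a nonzero scalar multiple of the symmetric product $b_1 \cdots b_n$; an inductive dimension count then shows that the left-iterated products of length $\leq n$ span $U_n$, and their union is thus all of $U$. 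This forces $f = 0$, so $\core(\fs\sinn) = 0$ as required.
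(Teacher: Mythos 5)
Your proof is correct and takes essentially the same route as the paper: the paper also reduces to showing $\core(\fs(U)\sinn)=\{0\}$ and argues by induction on the length of the generators $((c_1c_2)\cdots)c_n$, using exactly your identity in the form $f(uc_n) = [f,R_{c_n}](u) + R_{c_n}f(u)$ together with the fact that brackets with $R_{c_n}$ stay inside the core. Your explicit iterated-bracket bookkeeping and the coradical-filtration argument that left-iterated products of primitives span $U$ are just spelled-out versions of steps the paper handles by quantifying over the whole ideal and by citing Shestakov--Umirbaev, respectively.
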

\begin{proof}
Unital connected bialgebras are linearly spanned by elements $((c_1c_2)\cdots)c_n$ where $c_1,\dots, c_n \in \Prim(U)$ and $n \geq 0$ \cite{SU02}. We will show  by induction on $n$ that any element in $\core(\fs(U)\sinn)$ kills these generators. The case $n = 0$ is clear by the definition of $\fs(U)$. The general case follows from $f( uc_n) = fR_{c_n}(u) = [f,R_{c_n}] (u) + R_{c_n}f(u)$ for any $f \in \core(\fs(U)\sinn)$ and $u =((c_1c_2) \cdots )c_{n-1}$. Thus, $\core(\fs(U)\sinn) = \{ 0 \}$.
\end{proof}

\subsection{Formal integration of triples} A \emph{right monoalternative} bialgebra is a bialgebra satisfying
\begin{equation*}
	((xy\1)\cdots) y_{(n)} = x((y\1 y\2)\cdots y_{(n)})
\end{equation*}
for any $x, y$ and any $n \geq 0$. The following result can be essentially found in \cites{MoPe10,Pe07}, although there it is formulated in terms of Sabinin algebras. An approach through these algebras is natural when the families of Hopf algebras under consideration are rooted to varieties of loops.
\begin{theorem}
Let $\tau = (\fg, \fs, \fc)$ be a triple. We have:
\begin{enumerate}
\item There exist a right monoalternative (connected) Hopf algebra $U_{\tau}$ and a morphism $\iota \colon \tau \to \T(U_{\tau})$ whose restriction to $\fc$ gives a linear isomorphism of $\fc$ and $\fc(U_{\tau})$.
\item For any monoalternative unital bialgebra $U'$ and any morphism $\iota' \colon \tau \to \T(U')$ there exists a unique homomorphism of bialgebras $\psi \colon U_{\tau} \to U'$ such that $\iota'_{c} (1) = \psi(\iota_c(1))$ for any $c \in \fc$.
\item $\tau$ and $\T(U_{\tau})$ are equivalent.
\item Given triples $\tau$ and $\tau'$, $U_{\tau}$ is isomorphic to $U_{\tau'}$ if and only if $\tau$ is equivalent to $\tau'$.
\end{enumerate}
\end{theorem}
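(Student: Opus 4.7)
The plan is to construct $U_{\tau}$ as a quotient of the universal enveloping algebra $U(\fg)$. Let $I := U(\fg)\fs^{+}$, a left ideal where $\fs^{+}$ is the augmentation ideal of $U(\fs) \subseteq U(\fg)$, and set $U_{\tau} := U(\fg)/I$. Since $\Delta(I) \subseteq I \otimes U(\fg) + U(\fg) \otimes I$, the quotient inherits a cocommutative coassociative coalgebra structure. The symmetrization map and the decomposition $\fg = \fs \oplus \fc$ yield, through the PBW theorem, a coalgebra isomorphism $U_{\tau} \cong \field[\fc]$ that I view as a coalgebra section $\sigma \colon U_{\tau} \to U(\fg)$ of the projection $\pi$. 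Using the left module action $\rho \colon U(\fg) \to \Endo_{\field}(U_{\tau})$, I define a product on $U_{\tau}$ by
\[
  u \star v := \rho(\sigma(v))(u) = \pi(\sigma(v) \cdot \sigma(u)),
\]
with unit $\pi(1)$. Left and right divisions exist because $U_{\tau}$ is a connected cocommutative bialgebra \cite{Pe07}.

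For part (1), setting $\iota(x) := \rho(x)$ for $x \in \fg$, the identity $\Delta(x\tilde{u}) = (x \otimes 1 + 1 \otimes x)\Delta(\tilde{u})$ in $U(\fg)$ shows that $\iota(x)$ is a coderivation of $U_{\tau}$, so $\iota(x) \in \fg(U_{\tau})$, and $\iota(s)(1) = \pi(s) = 0$ when $s \in \fs$. By construction $\iota(c)(u) = u \star c$, so $\iota(c) = R_{c}$, and the restriction $\fc \to \fc(U_{\tau})$ is bijective. That $\star$ is a bialgebra product follows from $\sigma$ being a coalgebra morphism: $\Delta(u \star v) = (\pi \otimes \pi)\Delta(\sigma(v)\sigma(u)) = \sum (u\1 \star v\1) \otimes (u\2 \star v\2)$. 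Right monoalternativity reduces, via $((u \star y\1) \star \cdots) \star y_{(n)} = \rho(\sigma(y_{(n)}) \cdots \sigma(y\1))(u)$, to the congruence
\[
  \sigma(y_{(n)}) \cdots \sigma(y\1) \equiv \sigma\bigl(((y\1 y\2) \cdots) y_{(n)}\bigr) \pmod{I},
\]
which follows by induction on $n$ using coassociativity.

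For the universal property (2), given $(U', \iota')$, extend $\iota' \colon \fg \to \fg(U')$ to $\tilde{\iota}' \colon U(\fg) \to \Endo_{\field}(U')$ via the universal property of enveloping algebras. Since $\iota'(\fs)(1) = 0$, the ideal $I$ annihilates $1 \in U'$, so $\psi(\bar{u}) := \tilde{\iota}'(u)(1)$ is well-defined as a linear map $U_{\tau} \to U'$. Coalgebra compatibility uses that $\iota'(\fg) \subseteq \fg(U')$ acts by coderivations; product preservation uses right monoalternativity in $U'$ to rewrite iterated right multiplications as the module action of $U(\fg)$ on $1 \in U'$. Uniqueness of $\psi$ is forced because $U_{\tau}$ is spanned by elements $((c_{1}c_{2}) \cdots )c_{n}$ with $c_{i} \in \fc$, on which $\psi$ is prescribed by $\psi(\iota_{c}(1)) = \iota'_{c}(1)$.

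For part (3), $\iota$ restricts to a morphism $\tau\sinn \to \T(U_{\tau})\sinn$, surjective onto the Lie subalgebra generated by $\fc(U_{\tau}) = \iota(\fc)$. Its kernel is an ideal of $\fg\sinn$ contained in $\fs\sinn$ (kernel elements annihilate $1$, hence lie in $\fs \cap \fg\sinn$), so it lies inside $\core(\fs\sinn)$; conversely, for $x \in \core(\fs\sinn)$, iterated brackets with $\fc$-elements remain in $\fs$, so a standard commutator/PBW argument pushes $x \cdot \sigma(v) \in I$ for every $v \in U_{\tau}$. With the preceding lemma, this yields $\tau\sred \cong \T(U_{\tau})\sred$. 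Part (4) is then immediate: one direction from (3), and the other from the universal property applied to an equivalence $\tau\sred \cong \tau'\sred$. \textbf{The main obstacle} is the combinatorial identity comparing Sweedler products in $U(\fg)$ to iterated $\star$-products in $U_{\tau}$, which underpins both right monoalternativity in (1) and product-preservation in the universal property.
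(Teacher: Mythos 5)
Your construction agrees with the paper's: $U_{\tau}=U(\fg)/U(\fg)\fs$, and since $\sigma$ lifts symmetrized monomials in a basis of $\fc$, your product $u\star v=\pi(\sigma(v)\sigma(u))$ is exactly the product \eqref{eq:productU} (the sum over permutations makes the order reversal immaterial); parts (2) and (3) also follow the paper's route. The genuine gap is at right monoalternativity, precisely the point you flag as the main obstacle. Writing $D:=\sigma(y_{(n)})\cdots\sigma(y_{(1)})-\sigma\bigl(((y_{(1)}y_{(2)})\cdots)y_{(n)}\bigr)$, what you must prove is that the operator $\rho(D)$ vanishes on all of $U_{\tau}$, i.e. that $D\,U(\fg)\subseteq I$. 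Your reduction to the congruence $D\equiv 0 \pmod I$ is strictly weaker, because $I=U(\fg)\fs$ is only a left ideal: $D\in I$ says exactly that $\rho(D)(\bar 1)=0$, i.e. that the monoalternative identity holds when evaluated at $x=\bar 1$, and that instance is automatic (both sides equal $((y_{(1)}y_{(2)})\cdots)y_{(n)}$), so the congruence you propose to prove by induction carries no information. For instance, any nonzero $d\in\fs$ lies in $I$, yet $\rho(d)(\bar c)=\overline{[d,c]}$ is in general nonzero, so membership in $I$ does not control the action on all of $U_{\tau}$. The content the paper supplies here is different: the defining formula \eqref{eq:productU} gives the distribution law \eqref{eq:symmetrical_distribution} for one basis of $\fc$; both sides are multilinear in $c_{1},\dots,c_{n}$, so the law is basis-free and specializes to $\bar u\,\bar c^{\,n}=((\bar u\bar c)\cdots)\bar c$, which is \eqref{eq:power_distribution}; monoalternativity then follows because the left-normed powers $\bar c^{\,n}$ of primitives span $U_{\tau}$ and their iterated coproducts are binomial. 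Some argument of this kind is indispensable, and since your proof of product preservation in (2) invokes monoalternativity of $U_{\tau}$ and $U'$, the gap propagates there as well.

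A secondary point: (4) is not immediate. For the implication that $\tau\sred\cong\tau'\sred$ forces $U_{\tau}\cong U_{\tau'}$ you need $U_{\tau}\cong U_{\tau\sred}$. From the kernel computation in (3) the morphism $\iota$ induces a morphism of triples $\tau\sred\to\T(U_{\tau})$, and (2) applied to the triple $\tau\sred$ with $U'=U_{\tau}$ yields a canonical bialgebra map $U_{\tau\sred}\to U_{\tau}$; but nothing formal makes this map bijective. The paper proves bijectivity by comparing the explicit symmetrized bases (establishing $U_{\tau}\cong U_{\T(U_{\tau})\sinn}\cong U_{\tau\sred}$), and you should include such a comparison rather than appeal to the universal property alone.
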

\begin{proof}
Consider the universal enveloping algebra $U(\fg)$ of $\fg$. The left ideal $U(\fg)\fs$ and the left module $U_{\tau} := U(\fg) / U(\fg) \fs$, which is also a coalgebra isomorphic to $\field[\fc]$. The element $u + U(\fg)\fs \in U_{\tau}$ will be denoted by $\bar{u}$. Let us fix a basis $\basisB$ of $\fc$. By the Poincar\'e-Birkhoff-Witt theorem, a basis of $U_{\tau}$ consists of elements
\begin{equation*}
\sum_{\sigma \in S_n}  \overline{c_{\sigma(1)} \cdots c_{\sigma(n)}} 
\end{equation*}
where $c_1,\dots, c_n \in \basisB$ and $n \geq 0$. We use this basis to define a product on $U_{\tau}$ as follows:
\begin{equation}\label{eq:productU}
\bar{u} \left( \sum_{\sigma \in S_n}  \overline{c_{\sigma(1)} \cdots c_{\sigma(n)}}  \right) := \sum_{\sigma \in S_n} \overline{c_{\sigma(n)} \cdots c_{\sigma(1)} u}.
\end{equation}
Notice that the indexes in the right-hand side of the previous equality can be arranged in any order. Clearly
\begin{equation}\label{eq:symmetrical_distribution}
\bar{u} \left( \sum_{\sigma \in S_n}  \overline{c_{\sigma(1)} \cdots c_{\sigma(n)}}  \right)  = \sum_{\sigma \in S_n} ((\bar{u}\overline{c_{\sigma(1)}})\cdots )\overline{c_{\sigma(n)}}.
\end{equation}
It is easy to check that endowed with this product and the coalgebra structure previously mentioned, $U_{\tau}$ is a connected unital bialgebra. Property (\ref{eq:symmetrical_distribution}) is in fact independent of the chosen basis $\basisB$, i.e. for any other basis $\basisB'$ of $\fc$ we also have
\begin{equation*}
\bar{u} \left( \sum_{\sigma \in S_n}  \overline{c'_{\sigma(1)} \cdots c'_{\sigma(n)}}  \right)  = \sum_{\sigma \in S_n} ((\bar{u}\overline{c'_{\sigma(1)}})\cdots )\overline{c'_{\sigma(n)}}
\end{equation*}
for any $c'_1,\dots, c'_n \in \basisB'$. Thus
\begin{equation}\label{eq:power_distribution}
	\bar{u} \bar{c}^n = ((\bar{u} \bar{c}) \cdots ) \bar{c}
\end{equation} 
for any $c \in \fc$. Since $\{\bar{c}^n \mid c \in \fc, \, n \geq 0\}$ linearly spans $U_{\tau}$, and $\bar{c}^n\1 \otimes \cdots \otimes \bar{c}^n_{(m)} = \sum_{i_1+\cdots + i_m = n} \frac{n!}{i_1!\cdots i_m!} \bar{c}^{i_1} \otimes \cdots \otimes \bar{c}^{i_m}$ then we can conclude that $U_{\tau}$ is right monoalternative. 

The action of $\fg$ on $U_{\tau}$ defines a homomorphism of Lie algebras $\iota \colon \fg \to \gl(U_{\tau})$ with
\begin{align*}
\Delta(\iota_x(\bar{u})) &= \Delta (\overline{xu}) = \overline{x u\1} \otimes \overline{u\2} + \overline{u\1} \otimes \overline{x u \2} \\
&= \iota_x(\overline{u\1}) \otimes \overline{u\2} + \overline{u\1} \otimes \iota_x(\overline{u\2}).
\end{align*}
Hence, $\iota$ determines a homomorphism of Lie algebras $\iota \colon \fg \to \fg(U_{\tau})$.  Moreover, for any $d \in \fs$ and $c \in \fc$, $\iota(d)(\bar{1}) = \bar{d} = \bar{0}$ and $\iota_c(\bar{u}) = \overline{cu} = \bar{u} \bar{c} = R_{\bar{c}}(\bar{u})$. Hence, $\iota$ is a morphism from $\tau$ to $\T(U_{\tau})$.  The restriction of $\iota$ to $\fc$ is a linear isomorphism of $\fc$ and $\{R_{\bar{c}} \mid c \in \fc\} = \{ R_{\bar{c}} \mid \bar{c} \in \Prim(U_{\tau})\} = \fc(U_{\tau})$. 

Now we will prove (2). Let $U'$ be a (right) monoalternative unital bialgebra and $\iota' \colon \tau \to \T(U')$ a morphism. We can extend $\iota'$ to a homomorphism of unital associative algebras $\varphi \colon U(\fg) \to \Endo_{\field}(U')$ to obtain a linear map $\psi \colon U(\fg) \to U'$ defined by $\psi(u) : = \varphi_u(1)$, where $\varphi_u$ denotes the image of $u$ under $\varphi$. For any $d \in \fs$ and any $u \in U(\fg)$ we have $\psi(ud) = \varphi_{ud}(1) = \varphi_u\varphi_d(1) =  \varphi_u(\iota'_d(1)) =0$. Therefore, $\psi$ induces a linear map $\psi \colon U_{\tau} \to U'$ which satisfies
\begin{align*}
\psi(\bar{u}\bar{c}) &= \psi(\overline{cu}) = \psi(cu) = \varphi_{cu}(1) = \varphi_c\varphi_u(1) = R_{\varphi_c(1)}\varphi_u(1) = \varphi_{u}(1) \varphi_c(1)\\
&= \psi(\bar{u}) \psi(\bar{c}),
\end{align*}
for any $c \in \fc$ and $u \in U(\fg)$. Together with the right monoalternativity of $U_{\tau}$ and $U'$--recall (\ref{eq:power_distribution})--this proves that $\psi$ is a homomorphism of algebras. With little extra effort we get that $\psi$ is also a homomorphism of unital bialgebras. 

For any $c \in \fc$, $\iota'_c = R_{\iota'_c(1)} = R_{\varphi_c(1)} = R_{\psi(c)} = R_{\psi(\bar{c})} = R_{\psi(\iota_c(\bar{1}))}$, and since $\{ \iota_c(\bar{1}) \mid c \in \fc\}$ generates the algebra $U_{\tau}$, $\psi$ is the unique homomorphism $U_{\tau} \to U'$ that satisfies this property. 

In order to prove (3) we consider the morphism $\iota \colon \tau \to \T(U_{\tau})$. It restricts to a surjective homomorphism  $\fg\sinn \to \fg(U_{\tau})\sinn$ whose kernel is contained in $\core(\fs\sinn)$. In fact, $\core(\fs\sinn)$ is also contained in this kernel since the image of $\core(\fs\sinn)$ is an ideal of $\fg(U_{\tau})\sinn$ living in $\fs(U_{\tau})\sinn$, and the only such an ideal is $\{0\}$. This proves the isomorphism $\tau\sred \cong \T(U_{\tau})\sinn = \T(U_{\tau})\sred$. 

The statement in item (4) will follow from the existence of isomorphisms $U_{\tau} \cong U_{\T(U_{\tau})\sinn} \cong U_{\tau\sred}$ that we proceed to prove. As in item (2), $U_{\tau}$ is a $\fg(U_{\tau})\sinn$-module and we have a map $\psi \colon U(\fg(U_{\tau})\sinn) \to U_{\tau}$ with $\psi(f) : = f(\bar{1})$ which, thanks to the monoalternativity, induces a homomorphism of unital bialgebras $\psi \colon U_{\T(U_{\tau})\sinn} \to U_{\tau}$. On the one hand, $ U_{\T(U_{\tau})\sinn} $ has a basis $\{ \sum_{\sigma \in S_n} \overline{R_{c_{\sigma(n)}}\cdots R_{c_{\sigma(1)}}} \mid c_1,\dots, c_n \in \fc, \, n \geq 0\}$; on the other hand, $\{ \sum_{\sigma \in S_n} \overline{c_{\sigma(1)} \cdots c_{\sigma(n)}} \mid c_1,\dots, c_n \in \fc, \, n\geq 0 \}$ is a basis of $U_{\tau}$, and $\psi$ bijectively maps the former  basis onto the latter; thus $\psi$ is an isomorphism.  This shows the existence of an isomorphism  $U_{\tau} \cong U_{\T(U_{\tau})\sred}$. The isomorphism $U_{\T(U_{\tau})\sred} \cong U_{\tau\sred}$ is induced by the isomorphism $\T(U_{\tau})\sred \cong \tau\sred$.

Let us assume that $\tau$ and $\tau'$ are equivalent triples, i.e. $\tau\sred \cong \tau'\sred$. Thus, $U_{\tau\sred} \cong U_{\tau'\sred}$ and, by the previous paragraph, $U_{\tau}$ is isomorphic to $U_{\tau'}$. Conversely, if  $U_{\tau}$ is isomorphic to $U_{\tau'}$ then $\T(U_{\tau})\sred$ is isomorphic to $\T(U_{\tau'})\sred$, which leads to an isomorphism $\tau\sred \cong \tau'\sred$. 
\end{proof}
While in the previous proof we have used the notation $\bar{u}$ for elements in $U_{\tau}$, once this algebra is obtained there is no need for writing the bar, and we will do not so. Even more, it will become apparent that it is much more convenient to consider an isomorphic copy $\rho_{\fc}$ of $\fc$ as being a part of the triple $\tau$, while keeping $\fc$ for the space of primitive elements of $U_{\tau}$; thus, 
\begin{equation*}
	\tau =( \fg, \fs, \rho_{\fc}) \quad\text{and}\quad \fc = \Prim(U_{\tau}).
\end{equation*}
The conceptual benefit of this change is that now the morphism $\iota \colon \tau \to \T(U_{\tau})$ maps $\rho_c$ to $R_c$, i.e.  $\rho_c$ is sought as the abstract counterpart of $R_c$, while $\fg$ behaves as an abstract version of, and probably larger than, the Lie algebra generated by the right multiplication operators by primitive elements in $U_{\tau}$. The action of $\fg$ on $U_{\tau}$ extends to a homomorphism $U(\fg) \to \Endo_{\field}(U_{\tau})$. Again, although not totally correct, it would be helpful to use the same symbol to denote both, the element in $U(\fg)$ and its image in $\Endo_{\field}(U_{\tau})$. Thus, we will freely write $f(u)$ for elements $f \in U(\fg)$ and $u \in U_{\tau}$. 

\subsection{Natural factorization of $U(\fg)$} For any triple $\tau = (\fg, \fs, \rho_{\fc})$ we will show that a natural factorization of $U(\fg)$, needed in Section~\ref{sec:hHa}, exists. The following lemma follows easily from the existence of Poincar\'e-Birkhoff-Witt bases.
\begin{lemma}\label{lem:kernel}
Let $\tau = (\fg, \fs, \rho_{\fc})$ be a triple. Then, $U(\fs)$ is the largest subcoalgebra contained in $\{ f  \in U(\fg) \mid f(1) = \epsilon(f) 1 \}$.
\end{lemma}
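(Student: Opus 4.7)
The plan is to exploit a PBW-type coalgebra decomposition of $U(\fg)$ together with a counit identity.

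First, I unpack the set in the statement: since $f(1) = \bar{f}$ in $U_\tau = U(\fg)/U(\fg)\fs$, the set $\{f \in U(\fg) \mid f(1) = \epsilon(f) \cdot 1\}$ equals $\field \cdot 1 + U(\fg)\fs$. Every $f \in U(\fs)$ splits as $\epsilon(f)\cdot 1 + f_+$ with $f_+ \in U(\fs)\fs \subseteq U(\fg)\fs$, so $U(\fs)$ is contained in this set; and $U(\fs)$ is a subcoalgebra because $\fs$ is a Lie subalgebra.

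For the maximality, PBW (via the symmetrization isomorphism $S(\fg) \cong U(\fg)$ and the coalgebra splitting $S(\fg) \cong S(\rho_{\fc}) \otimes S(\fs)$) provides a coalgebra isomorphism $\alpha \colon U_\tau \otimes U(\fs) \to U(\fg)$ under which the quotient map $\pi \colon U(\fg) \to U_\tau$ corresponds to $\Id \otimes \epsilon$, and the inclusion $U(\fs) \hookrightarrow U(\fg)$ admits a coalgebra retraction $\rho \colon U(\fg) \to U(\fs)$ corresponding to $\epsilon \otimes \Id$. A direct computation from the counit axioms yields the key identity
\[
\alpha \circ (\pi \otimes \rho) \circ \Delta \;=\; \Id_{U(\fg)}.
\]

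Now let $C$ be a subcoalgebra of $U(\fg)$ contained in $\field \cdot 1 + U(\fg)\fs$, so that $\pi(C) \subseteq \field \cdot \bar 1$. For any $c \in C$ we have $\Delta(c) \in C \otimes C$, and hence
\[
c \;=\; \alpha\bigl((\pi \otimes \rho)(\Delta(c))\bigr) \;\in\; \alpha\bigl(\pi(C) \otimes \rho(C)\bigr) \;\subseteq\; \alpha\bigl((\field \cdot \bar 1) \otimes U(\fs)\bigr) \;=\; U(\fs),
\]
where the last equality holds because $\alpha$ identifies $\bar 1 \otimes U(\fs)$ with $U(\fs) \subseteq U(\fg)$. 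Thus $C \subseteq U(\fs)$, establishing the maximality.

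The main technical obstacle is establishing the coalgebra isomorphism $\alpha$ and verifying that $\pi$ and $\rho$ correspond to the tensor-factor projections under it. This is a standard PBW/symmetrization computation, but requires some care because $\rho_{\fc}$ is only a vector-space complement to $\fs$ in $\fg$, not a Lie subalgebra. Once this structural input is available, the identity $\alpha \circ (\pi \otimes \rho) \circ \Delta = \Id$ follows from the counit axioms, and the conclusion is immediate.
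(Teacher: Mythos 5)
Your overall strategy is sound, and it is essentially a fleshed-out version of the justification the paper itself gives (the paper offers no written proof, only the remark that the lemma follows from the existence of Poincar\'e--Birkhoff--Witt bases): identify the set in question with $\field 1 + U(\fg)\fs$, produce a coalgebra splitting $\alpha\colon U_{\tau}\otimes U(\fs)\to U(\fg)$ compatible with the quotient map $\pi\colon f\mapsto f(1)=\bar f$ and with the inclusion of $U(\fs)$, and then the counit identity $\alpha\circ(\pi\otimes\rho)\circ\Delta=\Id$ forces any subcoalgebra $C$ with $\pi(C)\subseteq\field\bar 1$ into $U(\fs)$. The identification of the set, the verification of the key identity from the counit axioms, and the final inclusion are all correct as you state them.

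The one step that would fail as written is your construction of $\alpha$. The map you propose, namely the symmetrization isomorphism $\omega\colon S(\fg)\to U(\fg)$ composed with the splitting $S(\fg)\cong S(\rho_{\fc})\otimes S(\fs)$, is indeed a coalgebra isomorphism, but it does \emph{not} satisfy $\pi\circ\alpha=\Id\otimes\epsilon$, because full symmetrization does not respect the left ideal $U(\fg)\fs$. Concretely, for $c\in\rho_{\fc}$ and $d\in\fs$ one has $\omega(cd)=\tfrac12(cd+dc)$, and since $cd\in U(\fg)\fs$ while $dc=cd+[d,c]$, the image of $\omega(cd)$ in $U_{\tau}$ is $\tfrac12$ times the class of the $\rho_{\fc}$-component of $[d,c]$, which is nonzero in general; so $\pi\alpha(c\otimes d)\neq\epsilon(d)\bar c=0$, and with this $\alpha$ the first tensor slot in your key identity is no longer the quotient map, so the final inclusion breaks. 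The repair is the other standard form of PBW: define $\alpha\colon S(\rho_{\fc})\otimes U(\fs)\to U(\fg)$ by $c_1\cdots c_m\otimes g\mapsto\omega(c_1\cdots c_m)\,g$, i.e.\ symmetrize only the $\rho_{\fc}$-part and multiply on the right by $g\in U(\fs)$, identifying $S(\rho_{\fc})$ with $U_{\tau}$ via $c_1\cdots c_m\mapsto\overline{\omega(c_1\cdots c_m)}$. This map is a coalgebra morphism (it is the composite of $\omega\vert_{S(\rho_{\fc})}$ tensored with the inclusion of $U(\fs)$, followed by the multiplication of the bialgebra $U(\fg)$), it is bijective by PBW, it sends $1\otimes g$ to $g$, and since $g-\epsilon(g)1\in U(\fg)\fs$ it does satisfy $\pi\circ\alpha=\Id\otimes\epsilon$. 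With this replacement your argument goes through and proves the lemma.
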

Now we define elements $\rho_u$ ($u \in U_{\tau}$) in $U(\fg)$ that play the role of the right multiplication operators $R_u$ ($u \in U_{\tau}$). Let us fix a basis $\basisB$ of $\fc$, and recall the basis
\begin{equation*}
	\left\{ \sum_{\sigma \in S_n} ((c_{\sigma(1)} c_{\sigma(2)}) \cdots ) c_{\sigma(n)} \mid c_1,\dots, c_n \in \basisB, \, n \geq 0 \right\}
\end{equation*} 
of $U_{\tau}$. Define
\begin{equation}\label{eq:rho}
\rho_{\sum_{\sigma \in S_n} ((c_{\sigma(1)} c_{\sigma(2)}) \cdots ) c_{\sigma(n)}} := \sum_{\sigma \in S_n} \rho_{c_{\sigma(n)}} \cdots \rho_{c_{\sigma(1)}} \in U(\fg)
\end{equation}
for any $c_1,\dots, c_n \in \fc$ and any $n \geq 0$. We obtain in this way an injective morphism of coalgebras $U_{\tau} \to U(\fg)$ defined by $u \mapsto \rho_u$. Clearly
\begin{equation*}
	\rho_u(x) = xu \quad \forall u, x \in U_{\tau}.
\end{equation*}
\begin{proposition}
Let $\tau = (\fg, \fs, \rho_{\fc})$ be a triple. The coalgebra morphism
\begin{align*}
\pi \colon U(\fg) & \to  U(\fg) \\
f & \mapsto S(\rho_{f\1(1)})f\2 
\end{align*}
is a projection of $U(\fg)$ onto $U(\fs)$.
\end{proposition}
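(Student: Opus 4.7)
The plan is to verify three claims which together force $\pi$ to be a projection onto $U(\fs)$: (a) $\pi$ is a coalgebra morphism of $U(\fg)$; (b) $\pi(f)(1) = \epsilon(f)\,1$ in $U_{\tau}$ for every $f \in U(\fg)$; and (c) $\pi\vert_{U(\fs)} = \mathrm{id}_{U(\fs)}$. From (a) and (b), the image $\pi(U(\fg))$ is a subcoalgebra of $U(\fg)$ contained in $\{h \in U(\fg) \mid h(1) = \epsilon(h)\,1\}$, so Lemma~\ref{lem:kernel} forces $\pi(U(\fg)) \subseteq U(\fs)$; combined with (c), this yields simultaneously the surjectivity onto $U(\fs)$ and the idempotency $\pi^2 = \pi$.

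For (a), I will view $\pi$ as the convolution product $\alpha * \mathrm{id}$ with $\alpha(f) := S(\rho_{f(1)})$. A short induction on the length of $f$ (using that the action of $\fg$ on $U_{\tau}$ factors through $\fg(U_{\tau})$, a Lie algebra of coderivations) yields the key identity
\begin{equation*}
\Delta_{U_\tau}(f(u)) = f_{(1)}(u_{(1)}) \otimes f_{(2)}(u_{(2)}), \qquad f \in U(\fg),\ u \in U_\tau,
\end{equation*}
together with $\epsilon_{U_\tau}(f(u)) = \epsilon(f)\epsilon(u)$. Specializing to $u = 1$ shows that evaluation $\mathrm{ev}_1 \colon U(\fg) \to U_\tau$, $f \mapsto f(1)$, is a coalgebra morphism. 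Composing it with the coalgebra morphism $\rho \colon U_\tau \to U(\fg)$ of (\ref{eq:rho}) and with $S$ (which is a coalgebra morphism because $U(\fg)$ is cocommutative) makes $\alpha$ a coalgebra morphism. The last sub-step is the standard observation that the convolution of two coalgebra morphisms whose \emph{source} is a cocommutative coalgebra is again a coalgebra morphism; in verifying $\Delta \circ \pi = (\pi \otimes \pi) \circ \Delta$, cocommutativity of $U(\fg)$ is used precisely to transpose the inner two factors of the four-fold coproduct of $f$.

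For (b), set $u := f(1)$. The identity above gives $\Delta_{U_\tau}(u) = f_{(1)}(1) \otimes f_{(2)}(1)$, so writing $u_{(2)} = \rho_{u_{(2)}}(1)$ and invoking the antipode relation $S(\rho_{u_{(1)}})\,\rho_{u_{(2)}} = \epsilon(\rho_u)\,1 = \epsilon(u)\,1$ in $U(\fg)$ (valid because $\rho$ is a coalgebra morphism) yields
\begin{equation*}
\pi(f)(1) \;=\; S(\rho_{u_{(1)}})\bigl(f_{(2)}(1)\bigr) \;=\; \bigl(S(\rho_{u_{(1)}})\,\rho_{u_{(2)}}\bigr)(1) \;=\; \epsilon(u)\,1 \;=\; \epsilon(f)\,1,
\end{equation*}
where the last equality is the counit part of (a).

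For (c), take $f \in U(\fs)$. Since $U(\fs)$ is a subcoalgebra of $U(\fg)$, also $f_{(1)} \in U(\fs)$, and Lemma~\ref{lem:kernel} gives $f_{(1)}(1) = \epsilon(f_{(1)})\,1$; as $\rho_1 = 1_{U(\fg)}$ (the $n=0$ case of (\ref{eq:rho})), this yields $S(\rho_{f_{(1)}(1)}) = \epsilon(f_{(1)})\,1$ and hence $\pi(f) = \epsilon(f_{(1)})\,f_{(2)} = f$. I expect step (a) to be the main obstacle—not because it is technically deep, but because one has to be careful about where cocommutativity of $U(\fg)$ enters (in the convolution-is-a-coalgebra-morphism step) and to set up the coderivation identity $\Delta(f(u)) = f_{(1)}(u_{(1)}) \otimes f_{(2)}(u_{(2)})$ cleanly so that (b) falls out as a one-line consequence.
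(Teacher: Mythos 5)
Your proof is correct and takes essentially the same route as the paper: the same computation $\pi(f)(1)=S(\rho_{u\1})\rho_{u\2}(1)=\epsilon(f)\,1$ (with $u=f(1)$) combined with Lemma~\ref{lem:kernel} to conclude $\Ima \pi \subseteq U(\fs)$, and the same use of $f\1(1)=\epsilon(f\1)1$ for $f\in U(\fs)$ to get $\pi\vert_{U(\fs)}=\Id$. The only difference is that you also verify explicitly that $\pi$ is a coalgebra morphism (via the convolution of coalgebra morphisms over a cocommutative source), a point the paper's statement asserts without proof.
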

\begin{proof}
On the one hand, $\Ima \pi$ is a subcoalgebra of $U(\fg)$ and any $f \in \Ima \pi$ satisfies $\pi(f)(1) = S(\rho_{f\1(1)})f\2 (1) = S(\rho_{f\1(1)})\rho_{f\2 (1)}(1) = \epsilon(\rho_{f(1)}) 1 = \epsilon(f) 1$; thus, by Lemma~\ref{lem:kernel}, $\Ima \pi \subseteq U(\fs)$. On the other hand, for any $f \in U(\fs)$, $\pi(f) = S(\rho_{f\1(1)})f\2 = S(\rho_{\epsilon(f\1)1})f\2 = \epsilon(f\1)f\2 = f$. Therefore, $\pi$ is a projection of $U(\fg)$ onto $U(\fs)$.
\end{proof}
The following corollary is an immediate consequence of 1) the existence of Poincar\'e-Birkhoff-Witt bases for $U(\fg)$, and 2) the way we defined $\rho_u$ in (\ref{eq:rho}). 
\begin{corollary}
Let $\tau =(\fg, \fs, \rho_{\fc})$ be a triple. The mapping $f \mapsto \rho_{f\1(1)} \pi(f\2)$ defines a coalgebra isomorphism $U(\fg) \cong \rho_{U_{\tau}} \otimes_{\field} U(\fs)$.
\end{corollary}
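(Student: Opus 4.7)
The approach is to exhibit the proposed map $\psi\colon f \mapsto \rho_{f\1(1)} \otimes \pi(f\2)$ as the two-sided inverse of the multiplication map $m\colon \rho_{U_{\tau}} \otimes_{\field} U(\fs) \to U(\fg)$, $\rho_u \otimes g \mapsto \rho_u g$.

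First, $m$ is a linear isomorphism by Poincar\'e--Birkhoff--Witt. Choosing an ordered basis of $\fg$ that lists all elements of $\rho_{\fc}$ before those of $\fs$, the associated PBW basis of $U(\fg)$ factors as (ordered monomial in $\rho_{\fc}$)(ordered monomial in $\fs$). The PBW symmetrization isomorphism allows one to replace the ordered monomials in the left factor by their symmetrizations, and by~\eqref{eq:rho} these symmetrized elements are exactly $\{\rho_u : u \text{ in the PBW basis of } U_{\tau}\text{ from the proof of the preceding Theorem}\}$. Thus $m$ carries a basis to a basis. Moreover, $m$ is a coalgebra morphism: it is the restriction of the multiplication $U(\fg) \otimes U(\fg) \to U(\fg)$, which is a coalgebra morphism since $U(\fg)$ is a bialgebra, to the subcoalgebra $\rho_{U_{\tau}} \otimes U(\fs)$ (note $\rho_{U_{\tau}}$ is a subcoalgebra of $U(\fg)$ because $\rho$ is a coalgebra morphism, and $U(\fs)$ is a sub-bialgebra). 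Hence $m$ is a coalgebra isomorphism.

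It only remains to verify $m \circ \psi = \mathrm{id}_{U(\fg)}$, and by the previous step it suffices to check this on a PBW element $f = \rho_v h$ with $v \in U_{\tau}$ and $h \in U(\fs)$. Since $h(1) = \epsilon(h)\cdot 1$ in $U_{\tau}$ for any $h \in U(\fs)$ (because $U(\fs)$ maps to $\field \cdot 1$ in $U_{\tau} = U(\fg)/U(\fg)\fs$), we compute $\Delta f = \rho_{v\1} h\1 \otimes \rho_{v\2} h\2$ and $f\1(1) = \rho_{v\1}(h\1(1)) = \epsilon(h\1) v\1$. Unpacking $\pi(f\2) = S(\rho_{f\2(1)}) f\3$ via coassociativity $\Delta^{(2)}(f) = f\1 \otimes f\2 \otimes f\3$, and using $f\2(1) = \epsilon(h\2) v\2$ similarly, the counit terms on $h$ collapse by $\sum \epsilon(h\1)\epsilon(h\2) h\3 = h$, leaving
\begin{equation*}
m\psi(f) = \rho_{v\1} S(\rho_{v\2}) \rho_{v\3}\, h,
\end{equation*}
where $v\1 \otimes v\2 \otimes v\3 = \Delta^{(2)}(v)$ in $U_{\tau}$. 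Since $\rho$ is a coalgebra morphism, $\Delta^{(2)}(\rho_v) = \rho_{v\1} \otimes \rho_{v\2} \otimes \rho_{v\3}$, and the Hopf antipode identity $x\1 S(x\2) x\3 = x$ in $U(\fg)$ applied to $\rho_v$ gives $\rho_{v\1} S(\rho_{v\2}) \rho_{v\3} = \rho_v$, so $m\psi(f) = \rho_v h = f$, as desired.

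The main obstacle is bookkeeping the two independent families of Sweedler sums, those arising from $v \in U_{\tau}$ and from $h \in U(\fs)$, and correctly transferring the three-fold coproduct of $v$ into $U(\fg)$ through the coalgebra morphism $\rho$ so that the antipode identity can be applied. Treating $f$ in the explicit PBW form $\rho_v h$ keeps this transparent and avoids a more painful direct Sweedler manipulation on a general $f$.
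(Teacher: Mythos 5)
Your proof is correct and takes essentially the same route as the paper, which treats the corollary as an immediate consequence of the Poincar\'e--Birkhoff--Witt bases of $U(\fg)$ and the definition of $\rho_u$ in (\ref{eq:rho}); your PBW factorization of $U(\fg)$ through the multiplication map $\rho_{U_{\tau}} \otimes U(\fs) \to U(\fg)$ is precisely that argument. The explicit verification that $f \mapsto \rho_{f\1(1)} \otimes \pi(f\2)$ inverts multiplication, via $f\1(1) = \epsilon(h\1)v\1$ and the antipode identity $\rho_{v\1} S(\rho_{v\2}) \rho_{v\3} = \rho_v$, correctly fills in the details the paper leaves implicit.
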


%
\section{Hyporeductive Hopf algebras}
\label{sec:hHa}
In this section we will introduce hyporeductive Hopf algebras by analogy with hyporeductive loops. We will describe  hyporeductive triples associated to these Hopf algebras, closely following Sabinin's work, and finally we will formally integrate hyporeductive triples to obtain the corresponding hyporeductive Hopf algebras. 

\subsection{Hyporeductive loops, triples and Hopf algebras} A loop $Q$ is called (right) \emph{hyporeductive}\footnote{For analytic loops this definition agrees with the definition in \cites{Sa90a,Sa99}.} if the following properties hold:
\begin{itemize}
\item \emph{Right hypospeciality}: there exists a map $\circ \colon Q \times Q \rightarrow Q$ such that $u \circ e = e = e \circ u$, where $e$ stands for the identity element of $Q$, and \[ H(u,v)(xy) = H(u,v)(x) H'(u,v)(y),\] where by definition $H(u,v)(x) := r(u,v)(x) (u \circ v)$, 
\begin{equation*}
r(u,v)(x) := ((xu)v)/(uv) 
\end{equation*}
and $H'(u,v)(y) := (u \circ v) \backslash H(u,v)(y)$.
\item \emph{Right monoalternativity}: $((xy)y \cdots )y  = x((yy) \cdots y)$ for all $n \geq 0$, where $y$ appears $n$ times in each side of the equality.
\end{itemize}
\begin{definition}
A triple $\tau = (\fg, \fs, \rho_{\fc)}$ is called \emph{hyporeductive} if $\fg = N_{\fg}(\rho_{\fc}) + \rho_{\fc}$. 
\end{definition}
The Lie algebra generated by the fundamental vector fields of any analytic hyporeductive loop determines such a triple \cite{Sa99}. 
\begin{definition}
A non-associative Hopf algebra $U$ is called \emph{hyporeductive} if the following properties hold: 
\begin{itemize}
	\item \emph{Right hypospeciality}: there exists a coalgebra morphism $\circ \colon U \otimes U\rightarrow U$ such that $u \circ 1 = \epsilon(u)1 = 1 \circ u$ and
	\begin{displaymath}
			H(u,v) (xy) = H(u\1,v\1)(x) H'(u\2,v\2)(y),
	\end{displaymath}
	where by definition $H(u,v)(x) := r(u\1,v\2)(x) (u\2 \circ v\2)$, 
	\begin{equation*}
		r(u,v)(x) := ((x u\1) v\1)/(u\2 v\2)
	\end{equation*} 
	and $
	 H'(u,v)(y) := (u\1 \circ v\1) \backslash H(u\2,v\2)(y)$
	\item \emph{Right monoalternativity}: $(((xy\1)y\2)\cdots )y_{(n)} = x ((y\1 y\2)\cdots y_{(n)})$ for all $n \geq 0$.
\end{itemize}	
\end{definition}
Observe that for any $a \in \Prim(U)$ the monoalternativity implies $r(a,a) = R^2_a - R_{a^2} = 0$. Hence,
\begin{align}\label{eq:r}
	2r(a,b) &= r(a,b) - r(b,a) = R_bR_a - R_{ab} - R_aR_b + R_{ba} \\
	& = - [R_a,R_b] - R_{[a,b]} \nonumber
\end{align}
for all $a, b \in \Prim(U)$.
\begin{lemma}\label{lem:primitive}
	Let $U$ be a hyporeductive Hopf algebra. For any $a,b \in \Prim(U)$, the element $a \circ b$ is primitive.
\end{lemma}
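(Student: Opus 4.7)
The plan is to exploit only the coalgebra-morphism property of $\circ$ and its unital behavior on $1$; no use of the right-hypospeciality identity, nor of monoalternativity, should be needed. Since the comultiplication on $U\otimes U$ is $\Delta(u\otimes v)=(u\1\otimes v\1)\otimes(u\2\otimes v\2)$, the coalgebra-morphism assumption says
\begin{equation*}
\Delta(u\circ v)=(u\1\circ v\1)\otimes(u\2\circ v\2)
\end{equation*}
for every $u,v\in U$.

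Next I would specialize to $u=a$ and $v=b$ with $a,b\in\Prim(U)$, so $\Delta(a)=a\otimes 1+1\otimes a$ and $\Delta(b)=b\otimes 1+1\otimes b$. Expanding the four cross-terms of $(a\1\circ b\1)\otimes(a\2\circ b\2)$ gives
\begin{equation*}
(a\circ b)\otimes(1\circ 1)+(a\circ 1)\otimes(1\circ b)+(1\circ b)\otimes(a\circ 1)+(1\circ 1)\otimes(a\circ b).
\end{equation*}
Now invoke the normalization $u\circ 1=\epsilon(u)1=1\circ u$, together with $\epsilon(a)=0=\epsilon(b)$ (which holds because $a,b$ are primitive). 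The two middle terms vanish, and the outer ones simplify using $1\circ 1=1$, so
\begin{equation*}
\Delta(a\circ b)=(a\circ b)\otimes 1+1\otimes(a\circ b),
\end{equation*}
which is exactly the primitivity of $a\circ b$.

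There is essentially no obstacle: the whole argument is a one-line computation once the coalgebra-morphism structure of $\circ$ is written out, and it does not require any of the deeper hyporeductive identities. The only minor subtlety worth flagging is to make sure the coalgebra structure used on $U\otimes U$ is the standard tensor-product coalgebra, which is the implicit meaning of saying that $\circ$ is a coalgebra morphism.
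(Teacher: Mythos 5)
Your proof is correct and is essentially the same as the paper's: both expand $\Delta(a\circ b)=(a\1\circ b\1)\otimes(a\2\circ b\2)$ using primitivity of $a,b$, then kill the cross-terms via $a\circ 1=\epsilon(a)1=0=1\circ b$ and use $1\circ 1=1$. No issues.
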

\begin{proof}
	Since $\circ$ is a coalgebra morphism and $1 \circ a = 0 = a \circ 1$ for any primitive $a$, then $\Delta( a \circ b) = (a \circ b) \otimes 1 + (a \circ 1) \otimes (1 \circ b) + (1 \circ b) \otimes (a \circ 1) + 1 \otimes (a \circ b) = (a \circ b) \otimes 1 + 1 \otimes (a \circ b)$.
\end{proof}
\begin{proposition}
Let $U$ be a hyporeductive Hopf algebra. Then, $\T(U)\sinn$ is a hyporeductive triple.
\end{proposition}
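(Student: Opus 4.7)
The goal is $\fg(U)\sinn = N_{\fg(U)\sinn}(\fc(U)) + \fc(U)$. My plan is to show that $N_{\fg(U)\sinn}(\fc(U)) + \fc(U)$ is a Lie subalgebra of $\fg(U)\sinn$ containing $\fc(U)$; since $\fg(U)\sinn = \Lie_{\fg(U)}\langle \fc(U)\rangle$ is the smallest such, equality follows. Normalizers are always Lie subalgebras and $[N_{\fg(U)\sinn}(\fc(U)),\fc(U)] \subseteq \fc(U)$ by definition, so the entire problem reduces to showing
\begin{equation*}
    [R_a,R_b] \in N_{\fg(U)\sinn}(\fc(U)) + \fc(U) \quad\text{for all}\quad a,b \in \Prim(U).
\end{equation*}

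The candidate element of the normalizer will be $H(a,b)$ itself. For primitive $a$ and $b$, Lemma~\ref{lem:primitive} gives $\alpha := a \circ b \in \Prim(U)$, and $a \circ 1 = 0 = 1 \circ a$; combined with $\Delta(a) = a \otimes 1 + 1 \otimes a$, three of the four Sweedler summands in $H(a,b)$ collapse. A brief use of the division axioms~(\ref{eq:divisions}) yields $r(a,b) = R_b R_a - R_{ab}$ on primitives, and then~(\ref{eq:r}) gives
\begin{equation*}
    H(a,b) = r(a,b) + R_{\alpha} = -\tfrac{1}{2}[R_a,R_b] - \tfrac{1}{2} R_{[a,b]} + R_{\alpha}.
\end{equation*}
The primitivity of $[a,b]$ is immediate from $\Delta$ being an algebra morphism, so each of the three summands lies in $\fg(U)\sinn$; hence $H(a,b) \in \fg(U)\sinn$ and
\begin{equation*}
    [R_a,R_b] = -2\,H(a,b) - R_{[a,b]} + 2\,R_{\alpha},
\end{equation*}
reducing the problem to showing $H(a,b) \in N_{\fg(U)\sinn}(\fc(U))$.

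To verify the normalizer property, I would apply right hypospeciality with $y$ replaced by a primitive $c$. Exactly the same collapse of Sweedler terms reduces the identity to
\begin{equation*}
    H(a,b)(xc) = H(a,b)(x) \cdot c + x \cdot H'(a,b)(c),
\end{equation*}
which is equivalent, as an identity of operators on $U$, to $[H(a,b),R_c] = R_{H'(a,b)(c)}$. A parallel expansion of $H'(a,b)(c)$, using $\alpha \backslash c = -\alpha c$ (valid because $\alpha$ is primitive), yields
\begin{equation*}
    H'(a,b)(c) = -[\alpha,c] + (c,a,b),
\end{equation*}
and both summands are primitive: the commutator obviously, and the associator $(c,a,b)$ of primitive elements by a short direct coproduct computation using that $\Delta$ is an algebra morphism. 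Hence $R_{H'(a,b)(c)} \in \fc(U)$, so $H(a,b)$ normalizes $\fc(U)$, placing $H(a,b)$ in $N_{\fg(U)\sinn}(\fc(U))$, as needed.

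The main obstacle is the careful Sweedler bookkeeping on primitive inputs: checking the reductions $a \circ 1 = 0$, $r(a,1) = 0 = r(1,b)$, $r(1,1) = \Id$, $1 \backslash z = z$, and $\alpha \backslash y = -\alpha y$ for primitive $\alpha$, and then weaving them together so that hypospeciality outputs the clean operator identity $[H(a,b),R_c] = R_{H'(a,b)(c)}$ with a \emph{primitive} element on the right. Each check is elementary, but they must combine cleanly so that, modulo $\fc(U)$, the bracket $[R_a,R_b]$ is absorbed into the normalizer.
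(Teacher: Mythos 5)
Your proof is correct and follows essentially the same route as the paper: you collapse the hypospeciality identity on primitive $a,b$ to $H(a,b)(xy)=H(a,b)(x)y+xH'(a,b)(y)$, read off $[H(a,b),R_c]=R_{H'(a,b)(c)}$, and use $H(a,b)=r(a,b)+R_{a\circ b}=-\tfrac12[R_a,R_b]-\tfrac12 R_{[a,b]}+R_{a\circ b}$ together with (\ref{eq:r}) to place $[R_a,R_b]$ in $N_{\fg(U)\sinn}(\fc(U))+\fc(U)$. Your explicit check that $H'(a,b)(c)=-[a\circ b,c]+(c,a,b)$ is primitive, and the framing via ``$N+\fc$ is a subalgebra containing $\fc$'' instead of the paper's spanning claim, are only minor packaging differences from the paper's argument.
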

\begin{proof}
On the one hand, given $a, b \in \Prim(U)$, the hypospeciality of $U$ implies $H(a,b)(xy) = H(a,b)(x) y  + xH'(a,b)(y)$, and hence $[H(a,b),R_{y}] = R_{H'(a,b)(y)}$; on the other hand, by (\ref{eq:r}), $-2H(a,b) = - 2r(a,b) - 2R_{a \circ b} = [R_a,R_b] + R_{[a,b] - 2 a \circ b}$. Therefore, 
\begin{equation*}
	[[R_a,R_b] + R_{[a,b] - 2 a \circ b}, R_c] = R_{-2H'(a,b)(c)}.
\end{equation*}
This proves 1) $\fg(U)\sinn = \spann\langle R_a , [R_a, R_b] \mid a,b \in \Prim(U) \rangle$, and 2) $\fg(U)\sinn = N_{\fg(U)\sinn}(\fc(U)) + \fc(U)$.
\end{proof}

\subsection{Formal integration of hyporeductive triples} We will prove that for any hyporeductive triple $\tau$, the non-associative Hopf algebra $U_\tau$ is hyporeductive. To this end, let us fix a hyporeductive triple $\tau = (\fg, \fs, \rho_{\fc})$ and let us denote $N_{\fg} (\rho_{\fc})$ just by $\fn$. 
\begin{lemma}\label{lem:section}
	There exists a coalgebra morphism $\sigma \colon U(\fs) \to U(\fn)$ such that $ \pi \sigma$ is the identity map of $U(\fs)$.
\end{lemma}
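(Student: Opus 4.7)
The plan is to use the hyporeductive decomposition $\fg = \fn + \rho_{\fc}$ to obtain a linear section of $\pi$ at the primitive level, extend it via symmetrization to a coalgebra morphism $\sigma_0 \colon U(\fs) \to U(\fn)$, and correct by composing with the inverse of the resulting unitriangular coalgebra endomorphism $\theta := \pi \sigma_0$ of $U(\fs)$.

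First, I would identify $\pi|_{\fn}$ concretely. For $n \in \fn \subseteq \fg$, write $n = s + \rho_c$ uniquely under the direct sum $\fg = \fs \oplus \rho_{\fc}$. Since $s(1) = 0$ and $\rho_c(1) = c$ in $U_{\tau}$, we have $n(1) = c$, whence $\rho_{n(1)} = \rho_c$. Applying the formula $\pi(f) = S(\rho_{f\1(1)}) f\2$ to the primitive $n$ and using $S(\rho_c) = -\rho_c$ yields $\pi(n) = -\rho_c + n = s$. Thus $\pi|_{\fn}$ coincides with the linear projection along $\rho_{\fc}$, and is surjective onto $\fs$ by the hyporeductive identity $\fg = \fn + \rho_{\fc}$. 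I fix a linear section $\tilde\sigma \colon \fs \to \fn$.

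Next, I would extend $\tilde\sigma$ by symmetrization. In characteristic zero the symmetrization maps $\omega_{\fs} \colon \field[\fs] \to U(\fs)$ and $\omega_{\fn} \colon \field[\fn] \to U(\fn)$ are coalgebra isomorphisms, and the algebra (and hence coalgebra) morphism $\field[\fs] \to \field[\fn]$ induced by $\tilde\sigma$ transports to a coalgebra morphism $\sigma_0 \colon U(\fs) \to U(\fn)$. The composite $\theta := \pi \sigma_0 \colon U(\fs) \to U(\fs)$ is then a coalgebra endomorphism with $\theta(1) = 1$ and $\theta(s) = \pi(\tilde\sigma(s)) = s$ for every $s \in \fs$.

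The key step is the unitriangularity of $\theta$ for the coradical filtration: $(\theta - \Id)(U(\fs)_{\leq n}) \subseteq U(\fs)_{\leq n-1}$ for every $n \geq 1$. This follows by induction on $n$, since the top-degree component of $\theta(u) - u$, transported via the symmetrization isomorphism $U(\fs) \cong \field[\fs]$, must be primitive in $\field[\fs]$, and every nonzero primitive of $\field[\fs]$ lies in degree $1$. Once this is established,
\[
\theta^{-1} := \sum_{k \geq 0} (\Id - \theta)^k
\]
(composition powers) is a well-defined linear endomorphism of $U(\fs)$, finite on each filtration piece; being the inverse of a bijective coalgebra morphism, it is itself a coalgebra morphism. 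Setting $\sigma := \sigma_0 \circ \theta^{-1}$ then gives a coalgebra morphism $U(\fs) \to U(\fn)$ satisfying $\pi \sigma = \pi \sigma_0 \theta^{-1} = \theta \theta^{-1} = \Id_{U(\fs)}$, as required.
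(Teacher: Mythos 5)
Your proof is correct and follows essentially the same route as the paper: extend a linear section $\fs \to \fn$ of $\pi$ to a coalgebra morphism $\sigma_0 \colon U(\fs) \to U(\fn)$, observe that $\pi\sigma_0$ fixes $1$ and the primitive space $\fs$ pointwise, and then correct by composing with its inverse. The only differences are implementational: the paper builds the extension via $\exp^*$ and cites Sweedler's Theorem 12.2.6 for the invertibility of $\pi\sigma_0$, while you use the symmetrization isomorphisms and prove invertibility directly through the coradical filtration and a geometric series.
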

\begin{proof}
We include a proof for completeness. Since $\fg = \fs \oplus \rho_{\fc} = \fn + \rho_{\fc}$, then we can chose a linear map $\theta\colon U(\fs) \to \fn$ such that $\pi \theta(d) = d$ for any $d \in \fs$, and $\theta(1) = 0$. Let us extend this map to a coalgebra morphism $\theta'\colon U(\fs) \to U(\fn)$ by $\theta'=\exp^*(\theta)$, where
\begin{equation*}
	\exp^*(\theta)(x) := \sum_{n=0}^{\infty} \frac{1}{n!} \theta(x\1) \cdots \theta(x_{(n)}).
\end{equation*}
The coalgebra morphism $\Psi:= \pi \theta' \colon  U(\fs) \to U(\fs)$ is an isomorphism since its restriction to $\fs$ is the identity map of $\fs$ \cite{Sw69}*{Theorem 12.2.6}; in fact, $\Psi(d) = \pi\theta'(d) = \pi\theta(d) = d$ for all $d \in \fs$. A  coalgebra morphism that fulfills the requirement in the statement is $\sigma := \theta'\Psi^{-1}$. 
\end{proof}
For the coalgebra morphism $\sigma$ in Lemma~\ref{lem:section} we have
 \begin{equation}\label{eq:fundamental}
 \sigma (f) = \rho_{\sigma (f\1)(1)} \pi \sigma (f\2) = \rho_{\sigma (f\1)(1)} f\2.
 \end{equation}
\begin{theorem}
Let $\tau$ be a hyporeductive triple. Then, $U_{\tau}$ is a hyporeductive Hopf algebra.
\end{theorem}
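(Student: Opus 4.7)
Right monoalternativity for $U_\tau$ is already built in from Theorem~3.2, so the task reduces to exhibiting a coalgebra morphism $\circ\colon U_\tau\otimes U_\tau\to U_\tau$ with $u\circ 1 = \epsilon(u) = 1\circ v$ that verifies the hypospeciality identity. The plan is to lift the defect $r(u,v)$ to an element of $U(\fg)$, push it through $\sigma\pi$ to land in $U(\fn)$, and extract $\circ$ from the correction introduced by $\sigma$; the decomposition $\fg = \fn + \rho_\fc$ will then force the resulting operator to act on products as an autotopism.

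Concretely, I would first identify $r(u,v)$ with the action of
\[\hat r(u,v) := S(\rho_{u\2 v\2})\rho_{v\1}\rho_{u\1}\in U(\fg),\]
using that $\rho\colon U_\tau \to U(\fg)$ is a coalgebra morphism into an associative Hopf algebra, so $w\mapsto S(\rho_w)$ is its convolution inverse and implements right division $x\mapsto x/w$ under the $U(\fg)$-action. The element $\hat H(u,v) := \sigma\pi(\hat r(u,v))\in U(\fn)$ is then a coalgebra morphism $U_\tau\otimes U_\tau\to U(\fn)$ with $\hat H(u,1) = \hat H(1,v) = \epsilon\cdot 1$, and the fundamental formula (\ref{eq:fundamental}) combined with $\pi\sigma = \id$ shows that $\hat H$ and $\hat r$ share the same $U(\fs)$-component in the decomposition $U(\fg)\cong \rho_{U_\tau}\otimes U(\fs)$. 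I would then \emph{define} $u\circ v\in U_\tau$ by the Sweedler relation
\[\hat H(u,v)(1) = r(u\1,v\1)(1)\cdot(u\2\circ v\2),\]
solvable for $u\circ v$ because $r(-,-)(1)$ reduces to the unit at $1\otimes 1$; the boundary values of $\hat H$ and $r$ then enforce $u\circ 1 = \epsilon(u) = 1\circ v$, and this construction identifies the operator $\hat H(u,v)$ with the hypospeciality operator $H(u,v)$ of the definition.

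For the hypospeciality identity itself, I would exploit the normalizer property of $\fn$: from $[n,\rho_c] = \rho_{n(c)}$ for $n\in\fn$ and $c\in\fc$ (by evaluation at $1$), a rigidity argument on the connected coalgebra $U_\tau$ extends this to $[n,\rho_y] = \rho_{n(y)}$ for all $y\in U_\tau$, since both $[n,-]\circ\rho$ and $\rho\circ n$ are coderivations over $\rho$ that agree on $\fc$. By multiplicativity in $U(\fn)$ one obtains $n\rho_y = \rho_{n\1(y)}n\2$ as operators on $U_\tau$, and applying this with $n=\hat H(u,v)$ together with cocommutativity yields
\[\hat H(u,v)(xy) = \hat H(u\1,v\1)(x)\,\hat H(u\2,v\2)(y).\]
Rewriting each right-hand factor through the definition of $\circ$ and isolating the rightmost $\circ$-term with left division produces the required identity $H(u,v)(xy) = H(u\1,v\1)(x)\,H'(u\2,v\2)(y)$. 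I expect the main obstacle to be the Sweedler bookkeeping, both in solving the convolution equation that defines $\circ$ and in the final rearrangement into the precise form of $H'(u,v)(y) = (u\1\circ v\1)\backslash H(u\2,v\2)(y)$ prescribed by the hypospeciality definition.
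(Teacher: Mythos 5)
Your first half reproduces the paper's construction: you lift $r(u,v)$ to $\rho(u,v)=S(\rho_{u\1 v\1})\rho_{v\2}\rho_{u\2}\in U(\fs)$ (Lemma~\ref{lem:kernel}), push it through the section $\sigma$ of Lemma~\ref{lem:section}, and your ``convolution equation'' for $\circ$ collapses, since $r(u\1,v\1)(1)=\epsilon(u)\epsilon(v)1$, to exactly the paper's definition $u\circ v:=\sigma(\rho(u,v))(1)$, with $\hat H(u,v)=\rho_{u\1\circ v\1}\rho(u\2,v\2)$ acting as $H(u,v)$ by (\ref{eq:fundamental}). Up to this point the argument is sound and is essentially the paper's.

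The hypospeciality step, however, has a genuine error. For $n\in\fn=N_{\fg}(\rho_{\fc})$, evaluation at $1$ gives $[n,\rho_c]=\rho_{\,n(c)-n(1)c}$, not $\rho_{n(c)}$: elements of $\fn$ need not kill $1$, and in the genuinely hyporeductive (non-reductive) case they cannot all do so --- your own operator $\hat H(a,b)=\rho(a,b)+\rho_{a\circ b}$ has $\hat H(a,b)(1)=a\circ b\neq 0$ in general. Consequently the identity you derive, $H(u,v)(xy)=H(u\1,v\1)(x)\,H(u\2,v\2)(y)$, is false: setting $x=1$ it would force $(u\circ v)\,y=0$ for all $y$. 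No Sweedler rearrangement can repair this, because the correct second factor is $H'(u\2,v\2)(y)=(u\2\circ v\2)\backslash H(u\3,v\3)(y)$, which differs from $H(u\2,v\2)(y)$ precisely by that left division. The statement one actually needs is the conjugation stability $f\1\,\rho_{U_{\tau}}\,S(f\2)\subseteq\rho_{U_{\tau}}$ for $f\in U(\fn)$: this form is multiplicative in $f$, so it reduces to $f\in\fn$, where it equals $[f,\rho_y]$ and is handled via monoalternativity ($\rho_{c^n}=\rho_c^n$ plus linearization); evaluating at $1$ yields $f\1\rho_yS(f\2)=\rho_{f\1(S(f\2)(1)y)}$, and the shift $S(f\2)(1)$ is exactly what turns into the left division by $u\circ v$, i.e.\ into $H'$, in the final identity. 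Note also that your rigidity principle (coderivations along $\rho$ agreeing on $1$ and on primitives agree everywhere) is not valid as stated: on the connected coalgebra $\field[x]$ the coderivation dual to the derivation $t^2\,d/dt$ of $\field[[t]]$ vanishes on $1$ and $x$ but not on $x^2$; the extension from $\fc$ to all of $U_{\tau}$ genuinely requires the monoalternativity argument used in the paper.
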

\begin{proof}
We first define the map $\circ$ required for the hypospeciality. To this end, observe that the elements $\rho(u,v) := S(\rho_{u\1 v\1} ) \rho_{v\2}\rho_{u\2}$ ($u, v \in U_{\tau}$) span a coalgebra and they satisfy 
 \begin{equation*}
 \rho(u,v)(1) = S(\rho_{u\1 v\1})(u\2 v\2) =  S(\rho_{u\1 v\1})\rho_{u\2 v\2}(1) = \epsilon(u) \epsilon(v) 1.
 \end{equation*}
 Thus, by Lemma~\ref{lem:kernel}, $\rho(u,v) \in U(\fs)$. This ensures that the map
\begin{equation*}
u  \circ v := \sigma(\rho(u,v))(1)
\end{equation*}
is well-defined. Clearly, $u \circ 1 = \epsilon(u)1$, $1 \circ v = \epsilon(v) 1$ and
\begin{equation} \label{eq:fundamental_factorization}
\sigma( \rho(u,v)) = \rho_{u\1 \circ v\1} \rho(u\2, v\2) \in U(\fn).
\end{equation}
The hypospeciality of $U_{\tau}$ will be a consequence of 
\begin{equation*}
\rho_{u\1 \circ v\1} \rho(u\2, v\2) \rho_{U_{\tau}} S(\rho_{u\3 \circ v\3} \rho(u\4, v\4)) \subseteq \rho_{U_{\tau}}
\end{equation*}  
or, more generally, it will follow from $f\1 \rho_{U_{\tau}}  S(f\2) \subseteq \rho_{U_{\tau}} $ for all $f \in U(\fn)$. The advantage  is that we only have to prove it for  generators in $\fn$ of $U(\fn)$. Clearly $[f, \rho_{\fc}] \subseteq \rho_{\fc}$ by definition of $\fn$. In general,
\begin{equation*}
[f, \rho_{c^n}] = [f, \rho^n_{c}] = \sum \rho_c \cdots \rho_c \rho_{f'(c)}\rho_c \cdots \rho_c
\end{equation*}
for some $f'(c) \in \fc$. Hence, the linearization of $\rho_{c^n} = \rho^n_c$ leads to $[f,\rho_{c^n}] \in \rho_{U_{\tau}}$.

Finally, take $f := \rho_{u\1 \circ v\1} \rho(u\2, v\2)$. Evaluating at $1$ we obtain $f\1 \rho_{y} S(f\2) = \rho_{f\1 \rho_y S(f\2)(1)}$;
thus,  $f\1(S(f\2)(x)y) = x (f\1(S(f\2)(1)y))$ and
\begin{align*}
f(xy) = f\1(x) f\2(S(f\3)(1)y) = f\1(x) (f\2(1) \backslash f\3(y)).
\end{align*}
Since  $\rho_u(x) = R_u(x)$, $\rho(u,v)(x) = r(u,v)(x)$  and $f(x) = H(u,v)(x)$, we are done.
\end{proof}
%
\section{Pseudoreductive Hopf algebras}
\label{sec:pHa}
In this section we introduce pseudoreductive Hopf algebras by analogy with pseudoreductive loops. We will describe pseudoreductive triples associated to these Hopf algebras, and finally we will formally integrate pseudoreductive triples to obtain pseudoreductive Hopf algebras. Our approach sticks again to the work of Sabinin on analytic loops, and we cannot avoid his clever use of the formula for the differential of the exponential map without  resorting to artificial arguments.

\subsection{Pseudoreductive loops, triples and Hopf algebras.}
A loop $Q$ is called (right) \emph{pseudoreductive} if the following properties hold:
\begin{itemize}
	\item \emph{Right pseudospeciality}: there exists a map $\bullet\colon Q \times Q \rightarrow Q$ such that $e \bullet u = e = u \bullet e$ ($e$ stands for the identity element of $Q$) and 
	\begin{displaymath}
		P(u,v)(xy) = r(u,v)(x) P(u,v)(y)
	\end{displaymath}
	where $P(u,v)(x) := r(u,v)(x)(u \bullet v)$.
	\item \emph{Right monoalternativity}: $((xy)y \cdots )y  = x((yy) \cdots y)$ for all $n \geq 0$, where $y$ appears $n$ times in each side of the equality.
\end{itemize}
\begin{definition}
A  non-associative Hopf algebra $U$ is called (right) \emph{pseudoreductive} if the following properties hold: 
\begin{itemize}
	\item \emph{Right pseudospeciality}: there exists a coalgebra morphism $\bullet \colon U \otimes U \rightarrow U$ such that $1 \bullet u = \epsilon(u)1 = u \bullet 1 $ and
	\begin{displaymath}
			P(u,v) (xy) = 
			 r(u\1,v\1)(x) P(u\2,v\2)(y)
	\end{displaymath}
	where $P(u,v)(x) := r(u\1,v\2)(x) (u\2 \bullet v\2)$.
	\item \emph{Right monoalternativity}:  $(((xy\1)y\2)\cdots )y_{(n)} = x ((y\1 y\2)\cdots y_{(n)})$ for all $n \geq 0$.
\end{itemize}	
\end{definition}
\begin{definition}
We will say that a triple $\tau = (\fg, \fs, \rho_{\fc})$ is \emph{pseudoreductive} if there exists a map $\zeta \colon \fs \to \rho_{\fc}$ such that

\medskip

	(PRT1) $d + \zeta(d) \in N_{\fg}(\rho_{\fc})$ for all $d \in \fs$, and
	
\medskip

	(PRT2) $\ad^{2n}_{\rho_c} (\zeta(\fs)) \subseteq \rho_{\fc}$ for all $n \geq 0$ and $\rho_c\in \rho_{\fc}$.
\end{definition}
Notice that the first condition is equivalent to $\tau$ being hyporeductive.
\begin{lemma}
Let $U$ be a pseudoreducitve Hopf algebra. For any $a, b \in \Prim(U)$, $a \bullet b$ is also primitive.
\end{lemma}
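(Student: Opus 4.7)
The plan is to mimic the proof of Lemma~\ref{lem:primitive} verbatim, since the pseudoreductive operation $\bullet$ is axiomatized with exactly the structural features used there: it is a coalgebra morphism, and it satisfies the counital-type identity $1 \bullet u = \epsilon(u)1 = u \bullet 1$.

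First I would compute $\Delta(a \bullet b)$ using the fact that $\bullet$ is a coalgebra morphism, so that $\Delta(a \bullet b) = (a\1 \bullet b\1) \otimes (a\2 \bullet b\2)$. Next, I would substitute the primitivity of $a$ and $b$, namely $\Delta(a) = a \otimes 1 + 1 \otimes a$ and $\Delta(b) = b \otimes 1 + 1 \otimes b$, to expand this into four terms:
\begin{equation*}
(a \bullet b) \otimes (1 \bullet 1) + (a \bullet 1) \otimes (1 \bullet b) + (1 \bullet b) \otimes (a \bullet 1) + (1 \bullet 1) \otimes (a \bullet b).
\end{equation*}

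Finally, I would invoke the defining identity $1 \bullet u = \epsilon(u)1 = u \bullet 1$ together with $\epsilon(a) = 0 = \epsilon(b)$ (since $a, b$ are primitive) to conclude $a \bullet 1 = 0 = 1 \bullet b$, and $1 \bullet 1 = 1$. The two middle terms vanish, leaving $\Delta(a \bullet b) = (a \bullet b) \otimes 1 + 1 \otimes (a \bullet b)$, which shows $a \bullet b \in \Prim(U)$.

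There is no real obstacle here: the argument is entirely formal and does not require the pseudospeciality identity or the monoalternativity at all. The only point to watch is being careful with the coalgebra structure of the tensor product to justify the expansion of $\Delta(a \bullet b)$ via $\bullet$ being a coalgebra morphism (recalling cocommutativity makes the two middle cross-terms symmetric, but in any case both vanish).
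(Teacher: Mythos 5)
Your proof is correct and is exactly the paper's argument: the paper simply refers back to the proof of Lemma~\ref{lem:primitive}, which is the same formal computation of $\Delta(a \bullet b)$ via the coalgebra morphism property and the identity $1 \bullet u = \epsilon(u)1 = u \bullet 1$ that you carry out. Nothing more is needed; neither pseudospeciality nor monoalternativity enters, as you note.
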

\begin{proof}
See the proof of Lemma~\ref{lem:primitive}.
\end{proof}
\begin{proposition}\label{prop:pseudo_is_hypo}
Let $U$ be a pseudoreductive Hopf algebra. For any $a, b, c \in \Prim(U)$ we have
\begin{equation*}
	[r(a,b) + \frac{1}{2} R_{a \bullet b}, R_c]  = R_{r(a,b)(c) + \frac{1}{2}[c, a \bullet b]}.
\end{equation*}
Hence, $\T(U)\sinn$ is a hyporeductive triple.
\end{proposition}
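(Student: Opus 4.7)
The plan is to convert the pseudospeciality axiom into a primitive-level derivation identity, then directly compute the bracket $[r(a,b)+\tfrac{1}{2}R_{a\bullet b},R_c]$. As preparation, I would record the vanishings $r(1,b)=r(a,1)=0$ (by Sweedler-expanding $r(u,v)(x)=((xu\1)v\1)/(u\2 v\2)$ and using $(xb\1)/b\2=0$) and $P(1,b)=P(a,1)=0$ (since $1\bullet b=0=a\bullet 1$). Substituting $\Delta(a)=a\otimes 1+1\otimes a$ and $\Delta(b)=b\otimes 1+1\otimes b$ into the pseudospeciality identity, only two of the four Sweedler splittings contribute, and the axiom collapses to
\begin{equation*}
P(a,b)(xy) = r(a,b)(x)\,y + x\,P(a,b)(y), \qquad x,y\in U.
\end{equation*}

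Setting $y=c$ with $c$ primitive and reading this as an operator equation in $x$ gives $P(a,b)R_c = R_c\,r(a,b) + R_{P(a,b)(c)}$. Forming $[P(a,b),R_c]$, subtracting $\tfrac{1}{2}[R_{a\bullet b},R_c]$, and using $P(a,b)-r(a,b)=R_{a\bullet b}$ rearranges this into
\begin{equation*}
[r(a,b)+\tfrac{1}{2}R_{a\bullet b},R_c] = R_{P(a,b)(c)} - \tfrac{1}{2}\bigl(R_c R_{a\bullet b} + R_{a\bullet b} R_c\bigr).
\end{equation*}
To close out the identity I would invoke the primitive-level relation $R_xR_y+R_yR_x = R_{xy}+R_{yx}$, the symmetric companion of \eqref{eq:r} obtained by linearizing the monoalternative relation $r(a,a)=0$ to $r(a,b)+r(b,a)=0$. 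Applied with $x=c$ and $y=a\bullet b$ (primitive by the preceding lemma), it rewrites the right-hand side as $R_{r(a,b)(c)+\tfrac{1}{2}[c,a\bullet b]}$, which is the claim.

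For the ``hence'' clause, the identity just established places $r(a,b)+\tfrac{1}{2}R_{a\bullet b}$ inside $N_{\fg(U)\sinn}(\fc(U))$, since \eqref{eq:r} already locates it in $\fg(U)\sinn$ and the identity shows that it normalizes $\fc(U)$. Therefore $r(a,b)\in N_{\fg(U)\sinn}(\fc(U))+\fc(U)$, and by \eqref{eq:r} again $[R_a,R_b]=-2r(a,b)-R_{[a,b]}$ lies in $N_{\fg(U)\sinn}(\fc(U))+\fc(U)$ as well. Since this sum contains $\fc(U)$ and is stable under $\ad\fc(U)$, and since $\fg(U)\sinn$ is the Lie subalgebra generated by $\fc(U)$, we conclude $\fg(U)\sinn=N_{\fg(U)\sinn}(\fc(U))+\fc(U)$.

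The main obstacle I anticipate is the Sweedler bookkeeping that collapses the pseudospeciality axiom to the derivation-like two-term formula displayed above; once that reduction is in hand, the rest is a short bracket manipulation anchored by the symmetric monoalternative identity.
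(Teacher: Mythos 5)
Your argument is correct and essentially coincides with the paper's proof: both collapse pseudospeciality at primitive $a,b$ to the derivation identity $P(a,b)(xy)=r(a,b)(x)\,y+x\,P(a,b)(y)$, compute the bracket $[r(a,b)+\tfrac{1}{2}R_{a\bullet b},R_c]$ from the resulting operator relation, and close with the linearized monoalternative identity $R_cR_{a\bullet b}+R_{a\bullet b}R_c=R_{c(a\bullet b)+(a\bullet b)c}$ for the primitive element $a\bullet b$. Your spelled-out justification of the ``hence'' clause (stability of $N_{\fg(U)\sinn}(\fc(U))+\fc(U)$ under $\ad\fc(U)$ plus generation of $\fg(U)\sinn$ by $\fc(U)$) correctly supplies a step the paper leaves implicit, echoing its earlier hyporeductive proposition.
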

\begin{proof}
Fix $a, b, c \in \Prim(U)$. On the one hand, $P(a,b) = r(a,b) + R_{a \bullet b}$ and $P(a,b) R_y = R_y r(a,b) + R_{P(a,b)(y)}$ so
\begin{align*}
	[r(a,b) + \frac{1}{2} R_{a \bullet b}, R_c] &= - R_{a \bullet b}R_c + R_{P(a,b)(c)} + \frac{1}{2} [R_{a \bullet b}, R_c] \\
	&= - \frac{1}{2} R_{a \bullet b} R_c - \frac{1}{2} R_c R_{a \bullet b} + R_{P(a,b)(c)};
\end{align*}
on the other hand, the monoalternativity implies $R_{a \bullet b} R_c + R_c R_{a \bullet b} = R_{(a \bullet b)c + c (a \bullet b)}$, i.e. $[r(a,b) + \frac{1}{2} R_{a \bullet b}, R_c]$ is the right multiplication operator by the primitive element ${r(a,b)(c) + \frac{1}{2}[c, a \bullet b]} = [r(a,b) + \frac{1}{2} R_{a \bullet b}, R_c] (1)$.
\end{proof}
The following result highlights the property behind the axiom (PRT2) in the definition of pseudoreductive triples. 

%
\begin{proposition}
Let $U$ be a pseudoreductive connected Hopf algebra. Then
\begin{equation} \label{eq:Bol}
	R_{a \bullet b} R_y + R_y R_{a \bullet b} = R_{(a \bullet b) y + y (a \bullet b)}
\end{equation}
for all $a, b \in \Prim(U)$ and $y \in U$.
\end{proposition}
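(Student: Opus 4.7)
The identity (\ref{eq:Bol}), evaluated at an argument $x$, reads $(xp)y + (xy)p = x(py) + x(yp)$ where $p := a \bullet b$ is primitive by the Lemma preceding this Proposition; equivalently, the plan is to show that the sum of associators $(x, p, y) + (x, y, p)$ vanishes for all $x, y \in U$. I will first sharpen pseudospeciality for primitive $a, b$, derive a pseudo-derivation identity, then extend from primitive $y$ to arbitrary $y$ by induction.

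First I would simplify pseudospeciality when $u = a$ and $v = b$ are primitive. The division identity $(u v\1)/v\2 = \epsilon(v) u$ from (\ref{eq:divisions}) applied to primitive $v = b$ gives $\sum (u b\1)/b\2 = \epsilon(b) u = 0$, that is $u/b = -ub$, from which $r(1, b) = 0$, and symmetrically $r(a, 1) = 0$, $P(1, b) = 0 = P(a, 1)$. Only two Sweedler contributions survive in pseudospeciality, yielding the clean pseudo-derivation identity
\[ P(a, b)(xy) = r(a, b)(x)\,y + x\,P(a, b)(y) \qquad \forall\, x, y \in U, \]
or, as an operator identity, $P(a, b)\,R_y = R_y\,r(a, b) + R_{P(a, b)(y)}$. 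Substituting $P(a, b) = r(a, b) + R_p$ and separating the $R$-valued part from the residue gives $(x, y, p) = \delta(r(a, b))(x, y)$, with $\delta(r(a, b))(x, y) := r(a, b)(x)\,y + x\,r(a, b)(y) - r(a, b)(xy)$; this is the ``$(x, y, p)$-half'' of the Bol identity.

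For the base case, when $y \in \Prim(U)$ the identity (\ref{eq:Bol}) is the polarization $c \mapsto c + p$ of the right-monoalternativity $R_c^2 = R_{c^2}$, giving $R_p R_c + R_c R_p = R_{pc + cp}$. Swapping $(a, b) \to (b, a)$ in the pseudo-derivation identity and invoking $r(b, a) = -r(a, b)$ (a direct consequence of (\ref{eq:r})), then summing with the original gives the operator identity $R_{p + q} R_y = R_{y(p + q)}$ with $q := b \bullet a$; in particular $(x, y, p + q) = 0$, so $p + q$ lies in the right nucleus.

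Finally I would propagate Bol from primitive $y$ to arbitrary $y$ by induction on the coradical filtration, applying the pseudo-derivation identity to the iterated products $(xy)z$ and $x(yz)$ and subtracting to get a recursion expressing $P(a, b)((x, y', z'))$ in terms of associators with $r(a, b)$ inserted, a $(x, y', p)z'$-type term, and $(x, y', z'p)$. Combined with the antisymmetric swap, the inductive hypothesis, and the full Hopf-algebraic monoalternativity $\sum ((x y\1)y\2)\cdots y_{(n)} = x\,\sum(y\1 y\2)\cdots y_{(n)}$ for higher $n$, this should close the induction. The main obstacle is precisely this last step: disentangling the individual associator $(x, y, p)$ from the symmetric sum $(x, y, p + q)$ already proved, which is where monoalternativity beyond the polarized form must be invoked. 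This is the algebraic counterpart of Sabinin's use of the differential of the exponential map noted in the section preamble, and on the triple side corresponds to the even-order normalization condition $\ad^{2n}_{\rho_c}(\zeta(\fs)) \subseteq \rho_{\fc}$ from axiom (PRT2).
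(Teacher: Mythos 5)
Your preliminary reductions are sound and coincide with what the paper itself uses: specializing pseudospeciality to primitive $a,b$ (via $r(a,1)=0=r(1,b)$ and $P(a,1)=0=P(1,b)$) to get the pseudo-derivation identity $P(a,b)(xy)=r(a,b)(x)\,y+x\,P(a,b)(y)$, and the base case $R_pR_c+R_cR_p=R_{pc+cp}$ for primitive $c$ obtained by polarizing $R_c^2=R_{c^2}$; these are exactly the two facts combined in Proposition~\ref{prop:pseudo_is_hypo}. But your argument stops where the proposition actually begins: the passage from primitive $y$ to arbitrary $y\in U$ is only sketched, and you yourself flag it as unresolved (``this should close the induction'', ``the main obstacle is precisely this last step''). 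The detour through $q:=b\bullet a$ and the symmetrized identity $R_{p+q}R_y=R_{y(p+q)}$ is a dead end for this purpose: nothing relates $q$ to $p$, so there is no way to ``disentangle'' $(x,y,p)$ from $(x,y,p+q)$, and you propose no mechanism that would. The appeal to (PRT2) is also misplaced: that axiom is the triple-side shadow of the later statement about $\ad^{2m}_{R_c}(R_{a\bullet b})$ (Proposition~\ref{prop:pseudo}); the present proposition needs nothing beyond Proposition~\ref{prop:pseudo_is_hypo}, monoalternativity and connectedness.

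The missing idea is to work with the single operator $D:=r(a,b)+\frac{1}{2}R_{a\bullet b}$. Your two ingredients give $[D,R_c]=R_{c'}$ with $c':=r(a,b)(c)+\frac{1}{2}[c,a\bullet b]$ primitive (this is Proposition~\ref{prop:pseudo_is_hypo}). Now use monoalternativity in operator form, $R_c^{\,n}=R_{c^n}$ for every primitive $c$: replacing $c$ by $c+tc'$ and taking the coefficient of $t$ shows that $\sum_{i+j=n-1}R_c^{\,i}R_{c'}R_c^{\,j}$ is again a right multiplication operator, while the commutator expands as $[D,R_{c^n}]=[D,R_c^{\,n}]=\sum_{i+j=n-1}R_c^{\,i}[D,R_c]R_c^{\,j}=\sum_{i+j=n-1}R_c^{\,i}R_{c'}R_c^{\,j}$; hence $[D,R_{c^n}]\in R_U$. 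Since $U$ is connected, the powers $c^n$ of primitive elements span $U$, so $[D,R_y]\in R_U$ for every $y\in U$. Finally the pseudo-derivation identity rewrites $[D,R_y]=R_{P(a,b)(y)}-\frac{1}{2}\bigl(R_{a\bullet b}R_y+R_yR_{a\bullet b}\bigr)$, so the symmetric sum lies in $R_U$, and evaluating at $1$ yields (\ref{eq:Bol}). Note the role of the coefficient $\frac{1}{2}$: it absorbs the antisymmetric part $[R_{a\bullet b},R_y]$ into the controlled commutator $[r(a,b),R_y]$, isolating exactly the symmetric combination you need --- no induction on the coradical filtration, no comparison with $b\bullet a$, and no use of (PRT2).
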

\begin{proof}
By Proposition~ \ref{prop:pseudo_is_hypo} we have $[r(a,b) + \frac{1}{2} R_{a  \bullet b}, R_c] = R_{r(a,b)(c) + \frac{1}{2}[c, a \bullet b]}$, and  by monoalternativity $[r(a,b) + \frac{1}{2} R_{a \bullet b}, R_{c^n}] \in R_{U}$ for all $a,b,c \in \Prim(U)$. The set $\{ c^n \mid n \geq 0, \, c \in \Prim(U) \}$ spans $U$ ($U$ is connected), thus
\begin{equation*}
	-\frac{1}{2} (R_{a \bullet b} R_y + R_y R_{a \bullet b}) + R_{P(a,b)(y)} = [r(a,b) + \frac{1}{2} R_{a \bullet b}, R_{y}] \in R_{U}
\end{equation*}
for all $y \in U$. This proves $R_{a \bullet b} R_y + R_y R_{a \bullet b} \in R_U$. After evaluating this operator at $1$ we get the result.
\end{proof}
We can use  (\ref{eq:Bol}) to prove that $T(U)\sinn$ satisfies (PRT2) for the initial value $n = 1$. For any $a, b, c \in \Prim(U)$,
\begin{align*}
[R_c,[R_c,R_{a \bullet b}]] &= R^2_c R_{a \bullet b} - 2 R_c R_{a \bullet b} R_c + R_{a \bullet b}R^2_c\\
&= 3(R_{c^2}R_{a \bullet b} + R_{a \bullet b}  R_{c^2}) - 2( R_{a \bullet b} R_c R_c + R_c R_{a \bullet b} R_c + R_c R_c R_{a \bullet b}).
\end{align*}
Both expressions in parentheses are multiplication operators, so $[R_c,[R_c,R_{a \bullet b}]] \in \fc(U)$.  Unfortunately it is much more difficult to prove that $\T(U)\sinn$ is a pseudoreductive triple if we use this approach.  Sabinin \cite{Sa99} gave a short proof for analytic loops. It is convenient to fix two parameters $s,t$ so that we have at our disposal the algebra of formal power series $A[[s,t]]$ with coefficients in another algebra $A$. If $A$ is associative and unital, expressions such as $\exp(ta)$ make sense in $A[[s,t]]$. If $A$ is non-associative then although $\exp_l(ta):= 1 + ta + \frac{1}{2!}(ta)(ta) + \frac{1}{3!}((ta)(ta))(ta) + \cdots = \sum_{n=0}^\infty \frac{1}{n!} (ta)^n$ requires our convention about powers, for the most part, it is again a well-defined element in $A[[s,t]]$.
\begin{proposition}\label{prop:pseudo}
	Let $U$ be a pseudoreductive Hopf algebra. Then, $T(U)\sinn$ is a pseudoreductive triple.
\end{proposition}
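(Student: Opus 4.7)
Proposition~\ref{prop:pseudo_is_hypo} already establishes that $\T(U)\sinn$ is hyporeductive with $\zeta(r(a,b)):=\tfrac12 R_{a\bullet b}$, so axiom (PRT1) is in place. This assignment is well defined because $\fs(U)\sinn$ is spanned by the operators $r(a,b)$ with $a,b\in\Prim(U)$: from the hyporeductive section we have $\fg(U)\sinn=\spann\langle R_a,[R_a,R_b]\mid a,b\in\Prim(U)\rangle$, and (\ref{eq:r}) gives $[R_a,R_b]=-2r(a,b)-R_{[a,b]}$, so intersecting with $\fs(U)$ kills all pure $R_p$ contributions. What remains is axiom (PRT2):
\begin{equation*}
\ad_{R_c}^{2n}(R_{a\bullet b})\in\fc(U)\qquad\text{for all }a,b,c\in\Prim(U),\ n\geq 0.
\end{equation*}
The cases $n=0$ (primitivity of $a\bullet b$, the lemma just above) and $n=1$ (the calculation immediately preceding the statement) are in hand; the plan is to dispatch all $n$ uniformly, following Sabinin \cite{Sa99}.

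The key device is a formal parameter $t$ and the grouplike exponential $\gamma(t):=\exp_l(tc)\in U[[t]]$. An induction on $n$ from $\Delta(c)=c\otimes 1+1\otimes c$ yields $\Delta(c^n)=\sum_k\binom{n}{k}c^k\otimes c^{n-k}$, so $\Delta(\gamma(t))=\gamma(t)\otimes\gamma(t)$; right monoalternativity then gives $R_{\gamma(t)}=\exp(tR_c)$ in $\Endo_{\field}(U)[[t]]$. Evaluating (\ref{eq:Bol}) at $y=\gamma(t)$ produces the closed anticommutator identity
\begin{equation*}
\exp(tR_c)\,R_z+R_z\,\exp(tR_c)\;=\;R_{\gamma(t)z+z\gamma(t)},\qquad z:=a\bullet b,
\end{equation*}
whose right-hand side is manifestly a right-multiplication operator by an element of $U[[t]]$. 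The object I want to exhibit as a right-multiplication operator is the symmetric conjugation
\begin{equation*}
\Psi(t)\;:=\;\tfrac12\bigl(\exp(tR_c)R_z\exp(-tR_c)+\exp(-tR_c)R_z\exp(tR_c)\bigr)\;=\;\sum_{n\geq 0}\frac{t^{2n}}{(2n)!}\,\ad_{R_c}^{2n}(R_z),
\end{equation*}
because once $\Psi(t)=R_{p(t)}$ with $p(t)\in U[[t]]$ is established, reading off the coefficient of $t^{2n}/(2n)!$ gives $\ad_{R_c}^{2n}(R_z)=R_{p_n}$, and the coderivation property $\fg(U)(1)\subseteq\Prim(U)$ forces $p_n=\ad_{R_c}^{2n}(R_z)(1)\in\Prim(U)$, i.e. $R_{p_n}\in\fc(U)$.

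The bridge between the anticommutator identity and $\Psi(t)$ is where Sabinin's use of the differential of the exponential map enters. I would introduce two further parameters $s,s'$ and replace $a,b$ by the grouplike exponentials $\exp_l(sa),\exp_l(s'b)\in U[[s,s']]$; because $\bullet$ is a coalgebra morphism it preserves grouplikes, so pseudospeciality specialises at these arguments to a functional identity for the associated $P$-operator whose $ss'$-coefficient is controlled by $R_{a\bullet b}$. Applying the formal analog of
\begin{equation*}
d\exp_X(Y)=\exp(X)\circ\tfrac{1-e^{-\ad X}}{\ad X}(Y),\qquad X=tR_c,
\end{equation*}
then cancels the odd powers of $\ad_{R_c}$ in the symmetrization and rewrites $\Psi(t)$ as a combination of right-multiplication operators produced by iterated application of (\ref{eq:Bol}). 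I expect the main obstacle to be exactly this algebraic rendering of Sabinin's differential calculation: the passage preceding the statement itself concedes that avoiding it seems to require ``artificial arguments'', so the whole subtlety of the proof concentrates in carrying out the inversion of $\tfrac{1-e^{-\ad R_c}}{\ad R_c}$ inside $\Endo_{\field}(U)[[t]]$ while keeping the outputs in the set of right-multiplication operators.
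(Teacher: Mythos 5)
Your treatment of (PRT1) and your final reduction are fine: $\fs(U)\sinn$ is indeed spanned by the operators $r(a,b)$, the choice $\zeta(r(a,b))=\tfrac12 R_{a\bullet b}$ satisfies (PRT1) by Proposition~\ref{prop:pseudo_is_hypo}, and once $\ad_{R_c}^{2n}(R_{a\bullet b})\in R_U$ is known, evaluating at $1$ and using $\ad_{R_c}^{2n}(R_{a\bullet b})\in\fg(U)$ does give a primitive element, hence membership in $\fc(U)$ (your observation that monoalternativity gives $R_{\exp_l(tc)}=\exp(tR_c)$ is also correct). The problem is that the statement ``$\Psi(t)=\sum_n\frac{t^{2n}}{(2n)!}\ad_{R_c}^{2n}(R_{a\bullet b})\in R_{U[[t]]}$'' is exactly (PRT2) restated coefficient by coefficient, and the step that would prove it is precisely the one you leave as a plan (``I would introduce two further parameters\dots'', ``I expect the main obstacle to be\dots''). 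So the heart of the proposition is missing from the proposal.

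Moreover, the sketched bridge points in the wrong direction. Identity (\ref{eq:Bol}) controls the anticommutator $R_{a\bullet b}R_y+R_yR_{a\bullet b}$ for arbitrary $y\in U$, i.e.\ same-sign two-sided multiplication by $R_{a\bullet b}$; it gives no control whatsoever over conjugation by $\exp(tR_c)$, since there is no analogue of (\ref{eq:Bol}) with $c$ in place of $a\bullet b$. Hence ``iterated application of (\ref{eq:Bol})'' cannot produce your opposite-sign products $\exp(\pm tR_c)R_{a\bullet b}\exp(\mp tR_c)$, and substituting the group-likes $\exp_l(sa),\exp_l(s'b)$ into $\bullet$ only returns data equivalent to the element $a\bullet b$ you already have. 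The paper's mechanism is different: iterating (\ref{eq:Bol}) shows that the symmetric sandwich $\exp(tR_{a\bullet b})\exp(sR_c)\exp(tR_{a\bullet b})$ lies in $R_{U[[s,t]]}$; evaluated at $1$ it is $R_g$ for a group-like $g$, hence equals $\exp(f(s,t))$ with $f(s,t)=R_{c(s,t)}$, $c(s,t)\in\Prim(U)[[s,t]]$ (this is (\ref{eq:Bruck})); differentiating in $t$ via the $d\exp$ formula gives $\partial_t f=2\sum_m\frac{\beta_{2m}}{(2m)!}\ad_{f}^{2m}(R_{a\bullet b})$, and the even powers of $\ad_{R_c}$ appear only after setting $t=0$, where $\ad_{f(s,0)}=s\,\ad_{R_c}$; comparing coefficients of $s^{2m}$ and using $\beta_{2m}\neq 0$ yields $\ad_{R_c}^{2m}(R_{a\bullet b})\in\fc(U)$. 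In particular the exponential one conjugates with must be $\exp(tR_{a\bullet b})$, not $\exp(tR_c)$: in your $\Psi(t)$ the roles of $c$ and $a\bullet b$ are the reverse of what (\ref{eq:Bol}) can feed, and it is only the log-and-differentiate step that transfers the information from $R_{a\bullet b}$-conjugation to powers of $\ad_{R_c}$. As written, your argument has a genuine gap exactly at this point.
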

\begin{proof}
Identity (\ref{eq:Bol}) implies
\begin{equation*}
\sum_{i = 0}^n \binom{n}{i} R^i_{a \bullet b} R_y R^{n-i}_{a \bullet b} \in R_U
\end{equation*}
for any $a, b \in \Prim(U)$ and $y \in U$. Thus, $\exp(t R_{a \bullet b}) \exp(sR_c) \exp(t R_{a \bullet b}) \in  R_{U[[s,t]]}$
for all $a, b, c \in \Prim(U)$. Evaluating at $1$ we obtain
\begin{equation}\label{eq:Bruck}
	\exp(t R_{a \bullet b}) \exp(sR_c) \exp(t R_{a \bullet b})  = R_{(\exp_l(t a \bullet b) \exp_l(sc))\exp_l(t a \bullet b)}.
\end{equation}
The element  ($\exp_l(t a \bullet b) \exp_l(sc)) \exp_l(t a \bullet b)$ is group-like (see \cite{MoPeSh16a} for a discussion on group-like elements), so there exists $c(s,t) = c(s,t;a,b) \in \Prim(U)[[s,t]]$ such that $f(s,t) := R_{c(s,t)}$ satisfies
\begin{equation*}
 \exp(t R_{a \bullet b}) \exp(sR_c) \exp(t R_{a \bullet b})  = \exp(f(s,t)).
\end{equation*}
Taking derivatives with respect to $t$ we have\footnote{Consider the free associative algebra $\field\langle x,x' \rangle$ and the derivation $d:= x'\frac{\partial}{\partial_x}$. With the natural Hopf algebra structure ($x,x'$ are primitive) the map $\gamma(u) = S(u\1) d(u\2)$ satisfies $\gamma(ux) = [\gamma(u),x] + \epsilon(u)x'$. By allowing formal power series, we easily obtain $\exp(-x) d(\exp(x)) = \gamma(\exp(x)) = \frac{\exp(-\ad_{x})-\Id}{-\ad_x}(x')$.}
\begin{align*}
\frac{\exp(- \ad_{f(s,t)}) - \Id}{- \ad_{f(s,t)}} \left(\frac{\partial f}{\partial t}\right) &= \exp( - f(s,t)) \frac{\partial \exp(f(s,t))}{\partial t}\\
&= \exp(-f(s,t)) R_{a \bullet b} \exp(f(s,t)) + R_{a \bullet b} \\
&= \exp(- \ad_{f(s,t)})(R_{a \bullet b}) + R_{a \bullet b}.
\end{align*}
Therefore,
\begin{align*}
	\frac{\partial f}{\partial t} &= \frac{\ad_{f(s,t)}}{\exp(\ad_{f(s,t)}) - \Id} (R_{a \bullet b}) + \frac{-\ad_{f(s,t)}}{\exp(- \ad_{f(s,t)}) - \Id}(R_{a \bullet b})\\
	&= 2 \sum_{2m \geq 0} \frac{\beta_{2m}}{(2m)!} \ad^{2m}_{f(s,t)}(R_{a \bullet b}) 
\end{align*}
where $0\neq \beta_{2m}$ is the  $2m$-th Betti number. Evaluating at $t = 0$, since $f(s,0 ) = R_{sc}$ and $f \in R_{U[[s,t]]}$, we obtain $\ad^{2m}_{R_{c}}(R_{a \bullet b}) \in \fc(U)$ for all $m \geq 0$ and $a, b , c \in \Prim(U)$. This proves the pseudospeciality of $\T(U)\sinn$. 
\end{proof}
Equality (\ref{eq:Bruck}) naturally appears in the context of Bruck loops and symmetric homogeneous spaces. These structures are related to Lie triple systems, i.e. subspaces of Lie algebras closed under the double commutator. Axiom (PRT2) evokes these structures, and our presentation aimed for it.

\subsection{Formal integration of pseudoreductive triples}
\begin{theorem}
Let $\tau$ be a pseudoreductive triple. Then, $U_{\tau}$ is a pseudoreductive Hopf algebra.
\end{theorem}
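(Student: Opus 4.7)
Since (PRT1) is precisely the hyporeductivity condition on $\tau$, the formal integration for hyporeductive triples proved in Section~\ref{sec:hHa} already makes $U_\tau$ into a right monoalternative hyporeductive Hopf algebra. Hence right monoalternativity, the second axiom of pseudoreductivity, is free; what remains is to manufacture the coalgebra morphism $\bullet$ and verify right pseudospeciality.

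For $\bullet$ I reuse the construction of Lemma~\ref{lem:section}, applied to the specific section of $\pi$ supplied by $\zeta$: by (PRT1) the map $\theta\colon \fs\to \fn$, $\theta(d):=d+\zeta(d)$ (and $\theta(1):=0$), lands in $\fn:=N_{\fg}(\rho_{\fc})$ and satisfies $\pi\theta=\Id_{\fs}$. Lifting it as in Lemma~\ref{lem:section} yields a coalgebra section $\sigma\colon U(\fs)\to U(\fn)$ of $\pi$. Define
\[ u\bullet v \;:=\; \sigma(\rho(u,v))(1). \]
The required normalizations $u\bullet 1=\epsilon(u)1=1\bullet v$ and the primitivity of $a\bullet b$ for $a,b\in\Prim(U_\tau)$ follow exactly as in the hyporeductive construction and Lemma~\ref{lem:primitive}. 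By equation~(\ref{eq:fundamental}) one obtains the factorization
\[ P(u,v) \;:=\; \sigma(\rho(u,v)) \;=\; \rho_{u\1\bullet v\1}\,\rho(u\2,v\2) \;\in\; U(\fn). \]

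The substantive content is the right pseudospeciality identity $P(u,v)(xy)=r(u\1,v\1)(x)\,P(u\2,v\2)(y)$. Because $P(u,v)\in U(\fn)$, the identity $f(xy)=f\1(x)(f\2(1)\backslash f\3(y))$ for $f\in U(\fn)$ established in the proof of the hyporeductive theorem pins down the left-hand side. Specializing to primitive $a,b$ and comparing with the target, the discrepancy reduces to the vanishing of the associator $(x,a\bullet b,y)$ for all $x,y\in U_\tau$, equivalently to a Bol-type operator identity for $\rho_{a\bullet b}$ on the whole of $U_\tau$; the extension from primitive $u,v$ to general $u,v$ is then dictated by the coalgebra structure of $P(u,v)$. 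So pseudospeciality is reduced to this Bol identity, which is precisely where axiom (PRT2) must enter.

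The main obstacle is therefore to derive that Bol identity from (PRT2), reversing the direction of Proposition~\ref{prop:pseudo}. Axiom (PRT2) guarantees $\ad^{2m}_{\rho_c}(\zeta(d))\in\rho_{\fc}$ for every $m\geq 0$; feeding this information into the Bernoulli-number expansion of $x/(e^x-e^{-x})$ that governs the differential of $\exp$ (used in the proof of Proposition~\ref{prop:pseudo}) produces a formal power series $f(s,t)\in \rho_{U_\tau[[s,t]]}$ satisfying $\exp(f(s,t))=\exp(t\rho_{\zeta(d)})\exp(s\rho_c)\exp(t\rho_{\zeta(d)})$. Evaluating at $1$, differentiating in $t$ at $t=0$, and polarizing in $c$ then yields the Bol identity~(\ref{eq:Bol}) on all of $U_\tau$, from which pseudospeciality follows routinely. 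This algebraic reincarnation of Sabinin's BCH/exponential computation, executed now as an existence argument rather than a structure result, is the technical heart of the proof.
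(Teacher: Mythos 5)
There is a genuine gap, and it sits exactly where you declare the remaining work routine. Your $\bullet$ is built from the section of Lemma~\ref{lem:section} with $\theta(d)=d+\zeta(d)$, so it is just a particular instance of the hyporeductive operation $\circ$: for primitive $a,b$ the element $p:=a\bullet b$ satisfies $r(a,b)+R_{p}\in N_{\fg(U_\tau)}(\fc(U_\tau))$, and $P(u,v):=\sigma(\rho(u,v))$ lies in $U(\fn)$, hence obeys the hypospeciality identity $f(xy)=f\1(x)(f\2(1)\backslash f\3(y))$, not pseudospeciality. You correctly identify the discrepancy as the vanishing of the associator $(x,p,y)$, but that is the one-sided condition $R_yR_{p}=R_{py}$ for all $y$, which is strictly stronger than, and not equivalent to, the Bol identity (\ref{eq:Bol}); and all that (PRT2) can deliver through the exponential/Bernoulli argument is the symmetrized identity (\ref{eq:Bol}). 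Even granting Bol for your $p$, the residual obligation is still a one-sided associator identity, which fails in general, so the last step of your plan does not close. The missing ingredient is a normalization: one must set $R_{a\bullet b}:=\zeta(2r(a,b))$, i.e.\ require $r(a,b)+\tfrac12 R_{a\bullet b}$ (not $r(a,b)+R_{a\bullet b}$) to normalize $\fc(U_\tau)$. Then $[r(a,b)+\tfrac12R_{a\bullet b},R_y]=R_{r(a,b)(y)+\frac12[y,\,a\bullet b]}$ (valid for all $y$ by monoalternativity), combined with (\ref{eq:Bol}), yields exactly $P(a,b)(xy)=r(a,b)(x)y+xP(a,b)(y)$ with no leftover associator term; with your full-weight normalization the same computation leaves the unprovable one-sided term.

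The extension to non-primitive $u,v$ is a second, related gap: since elements of $U(\fn)$ only give the hyporeductive identity, one cannot read pseudospeciality off $P(u,v)\in U(\fn)$, and ``dictated by the coalgebra structure'' does not substitute for an argument. The paper instead introduces the Lie algebra $\fp$ generated by the operators $P(a,b)=r(a,b)+R_{a\bullet b}$ (with the correctly normalized $\bullet$), extends the map $d\mapsto d+2\zeta(d)$ (again the factor $2$) to a coalgebra morphism $\theta'=\exp^*(\theta)\colon U(\fs(U_\tau)\sinn)\to U(\fp)$, defines $u\bullet v:=\theta'\Psi^{-1}(r(u,v))(1)$, and obtains pseudospeciality from the factorization $\theta'\Psi^{-1}(r(u,v))=R_{u\1\bullet v\1}r(u\2,v\2)$ together with the primitive-level identity (\ref{eq:ternary_derivation}). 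What does survive of your proposal: monoalternativity is indeed free, and your derivation of (\ref{eq:Bol}) from (PRT2) via the equation $\partial f/\partial t=2\sum_{2m\geq 0}\frac{\beta_{2m}}{(2m)!}\ad^{2m}_{f}(R_{a\bullet b})$ with initial condition $f(s,0)=R_{sc}$ is essentially the paper's argument (uniqueness of the solution is what forces $f(s,t)\in\fc(U_\tau)[[s,t]]$) --- but it must be applied to the half-weight $a\bullet b$, and it proves the theorem only in tandem with that normalizer identity, not with yours.
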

\begin{proof}
Let $\tau = ( \fg, \fs, \rho_{\fc})$ be a pseudoreductive triple. The pseudoreductive structure of $\tau$ induces via  $\iota \colon \tau  \to \T(U_{\tau})$ a corresponding pseudoreductive structure on the triple $\T(U_{\tau})\sinn$. Thus, there exists a map $\zeta \colon \fs(U_{\tau})\sinn \to \fc(U_{\tau})$ such that $d + \zeta(d) \in N_{\fg(U_{\tau})\sinn}(\fc(U_{\tau}))$. Given $a, b \in \Prim(U_{\tau})$ we define $a \bullet b$ by
\begin{equation*}
	R_{a \bullet b} := \zeta( 2r(a,b)).
\end{equation*}
With this choice we get $[r(a,b) + \frac{1}{2} R_{a \bullet b}, R_c] = R_{r(a,b)(c) + \frac{1}{2}[c, a \bullet b]}$ for any $c \in \Prim(U_{\tau})$. The monoalternativity makes this identity valid for all $y \in U_{\tau}$, i.e.
\begin{equation*}
	[r(a,b) + \frac{1}{2}R_{a \bullet b}, R_y] = R_{r(a,b)(y) + \frac{1}{2}[y, a \bullet b]}.
\end{equation*}
Let us consider the function $f(s,t)$ in  
\begin{equation*}
 	\exp(t R_{a \bullet b}) \exp(sR_c) \exp(t R_{a \bullet b})  = \exp(f(s,t))
\end{equation*}
used in the proof of Proposition~ \ref{prop:pseudo}. Beware, now we cannot assume $f(s,t) \in \fc(U_{\tau})[[s,t]]$.
However, this function is uniquely determined by 
\begin{equation*}
	\frac{\partial f}{\partial t} = 2 \sum_{2m \geq 0} \frac{\beta_{2m}}{(2m)!} \ad^{2m}_{f(s,t)}(R_{a \bullet b})
\end{equation*}
and the inital condition $f(s,0) = R_{sc} \in \fc(U_{\tau})[[s,t]]$, which by  (PRT2)  ensures $f(s,t) \in \fc(U_{\tau})[[s,t]]$ and $\exp(f(s,t)) \in R_{U_{\tau}[[s,t]]}$. In particular, 
\begin{equation*}
	\frac{\partial\exp(f)}{\partial t}(s,0) \in R_{U_{\tau}[[s]]} 
\end{equation*}
implies
\begin{equation*}
	R_{a \bullet b} R_y + R_y R_{a \bullet b} = R_{(a \bullet b) y + y (a \bullet b)}.
\end{equation*}
for all $a, b \in \Prim(U_{\tau})$ and $y \in U_{\tau}$. Therefore, if we define $P(a,b) := r(a,b) + R_{a \bullet b}$ then 
\begin{equation}\label{eq:ternary_derivation}
P(a,b)(xy) = r(a,b)(x)y + x P(a,b)(y).
\end{equation}
Let $\fp$ be the Lie algebra generated by $\{ P(a,b) \mid a, b \in \Prim(U_{\tau}) \}$ and let  $\theta$ be a linear map $U(\fs(U_{\tau})\sinn) \to \fp$  such that $\theta(d) =  d + 2 \zeta(d)$ for all $d \in \fs(U_{\tau})\sinn$, i.e. $\theta(r(a,b)) =  P(a,b)$, and $\theta(1) = 0$. We extend $\theta$ to a coalgebra morphism $\theta' \colon U(\fs(U_{\tau})\sinn) \to U(\fp)$ by $\theta'= \exp^*(\theta)$ as in the proof of Lemma~\ref{lem:section}. We also define the isomorphism $\Psi := \pi \theta'$ and  elements
\begin{equation*}
	u \bullet v := \theta'\Psi^{-1}(r(u,v))(1).
\end{equation*}
Any operator $\theta' \Psi(r(u,v)) = R_{u\1 \bullet v\1} r(u\2,v\2)$ belongs to  $U(\fp)$, so (\ref{eq:ternary_derivation}) implies
\begin{equation*}
R_{u\1 \bullet v\1} r(u\2, v\2)(xy) = r(u\1, v\1)(x) R_{u\2 \bullet v\2)} r(u\3, v\3)(y)
\end{equation*}
as desired.
\end{proof}

%
\section{Hyporeductive triple algebras}
\label{sec:hta}
Hyporeductive loops can also be constructed from hyporeductive triple algebras. In this section we discuss these structures from the point of view of Sabinin algebras.

\subsection{Hyporeductive triple algebras} The origin of \emph{hyporeductive triple algebras} is linked to the following construction. Let $\tau = (\fg, \fs, \fc)$ be a hyporeductive triple (here it would be more convenient to return to the original notation $\fc$ instead of $\rho_{\fc}$), so $\fg = \fn + \fc$ where $\fn = N_{\fg}(\fc)$. We can choose a subspace $\fh \subseteq \fn$ such that $\fg = \fh \oplus \fc$. This decomposition induces two binary products $a \cdot b$, $a*b$ and a triple product $[c;a,b]$ on $\fc$ as follows
\begin{align}\label{eq:hrta}
		[a,b] &= h(a,b) + a \cdot b \nonumber\\
		h(a,b) &= s(a,b) + a*b \\
		[h(a,b),c] &= [c;a,b] \nonumber
\end{align}
with $h(a,b) \in \fh$, $s(a,b) \in \fs$ and $a,b,c \in \fc$. The new algebraic structure $(\fc, a \cdot b, a* b, [c; a,b])$  is characterized by twelve axioms (apart from the obvious skew-commutativity) coming from the Jacobi identity \cite{Is10}. Conversely, given a hyporeductive triple algebra $(\fc, a \cdot b, a* b, [c; a,b])$ there exists a Lie algebra $E(\fc)$ generated by $\fc$ with relations
\begin{equation*}
[[a,b] - a \cdot b, c] = [c; a,b]
\end{equation*}
for any $a, b, c \in \fc$. This Lie algebra splits as $E(\fc) = \fs  \oplus \fc$ with $\fs := \spann \langle [a,b] - a \cdot b \mid a, b \in \fc \rangle$ being a subalgebra. Thus, $(E(\fc), \fs, \fc)$ is a hyporeductive triple \cite{Is10}. The hyporeductive triple algebra  $(\fc, a \cdot b, a* b, [c; a,b])$  is recovered from this triple by (\ref{eq:hrta}).  Unfortunately, the construction of  hyporeductive triple algebras from hyporeductive triples  involves the choice of $\fh$.

The formal integration of a hyporeductive triple algebra $(\fc, a \cdot b, a*b, [c;a,b])$ might be understood as the construction of a hyporeductive (connected) Hopf algebra $U(\fc)$ such that $\fc = \Prim(U(\fc))$ and
\begin{equation} \label{eq:universal_property}
[a,b] = a*b + a \cdot b \quad \text{and}\quad [[R_a,R_b] + R_{a \cdot b}, R_c] = R_{[c;a,b]}.
\end{equation}
It is not difficult to check that if we begin with the Lie algebra $E(\fc)\supopp$ (recall that $\fg\supopp$ has the opposite product $-[x,y]$ instead of the product $[x,y]$ of $\fg$) and the triple $(E(\fc)\supopp, \fs\supopp, \fc)$ then the corresponding hyporeductive Hopf algebra associated to this triple satisfies the required properties.

\subsection{Sabinin algebras associated to hyporeductive triple algebras} \label{subsec:Sabinin} As discussed at the beginning of this paper, the algebraic structure of the tangent space of any analytic loop is known as Sabinin algebra. For general loops, two families of multilinear operations $\langle x_1,\dots, x_m; y,z \rangle$, $\Phi(x_1,\dots, x_m; y_1,\dots, y_n;y_{n+1})$ take part in the description of the corresponding Sabinin tangent algebra. However, for monoalternative loops only the operations $\langle x_1,\dots, x_n; y,z \rangle$, $n \geq 0$ are required since $\Phi(x_1,\dots, x_m; y_1,\dots, y_n;y_{n+1}) = 0$ for these loops. The connection of hyporeductive triple algebras with the infinitesimal study of hyporeductive loops suggests that a Sabinin algebra can be obtained out of any such triple algebra. The ``underline'' notation from Section~\ref{subsec:SU_operations} is quite useful for that.

\begin{theorem}
Let $(\fc, a \cdot b, a *b, [c;a,b])$ be a hyporeductive triple algebra. The multilinear operations
\begin{align*}
	\langle 1; y,z \rangle &:= - y \cdot z - z*y \\
	\langle\underline{u{x_{m+1}}}; y,z \rangle &:= \langle \underline{u}; x_{m+1}, y \cdot z \rangle + \langle \underline{u}\1 ; x_{m+1}, \langle \underline{u}\2; y,z \rangle \rangle + \epsilon(u) [x_{m+1}; y,z]
\end{align*}
define a Sabinin algebra structure on $\fc$.
\end{theorem}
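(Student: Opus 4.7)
The plan is to deduce the theorem from the Shestakov--Umirbaev description of the tangent Sabinin algebra of a right monoalternative Hopf algebra, applied to a Hopf algebra that integrates the given triple algebra. Set $\tau := (E(\fc)\supopp, \fs\supopp, \fc)$ and let $U(\fc) := U_{\tau}$ be the hyporeductive Hopf algebra produced by the formal integration theorem of Section~\ref{sec:hHa}. By construction $\Prim(U(\fc)) = \fc$ and the identities in (\ref{eq:universal_property}) hold. By the Shestakov--Umirbaev theorem \cite{SU02}, $\Prim(U(\fc))$ is a Sabinin subalgebra of $\SU(U(\fc))$, carrying the operations $\langle x_1, \dots, x_m; y, z\rangle$ and $\Phi(x_1, \dots, x_m; y_1, \dots, y_n; y_{n+1})$ defined in Section~\ref{subsec:SU_operations}. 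Since $U(\fc)$ is right monoalternative, the $\Phi$ family vanishes on primitives, so the Sabinin structure on $\fc$ is fully encoded by the family $\langle \underline{u}; y, z\rangle$.

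It therefore suffices to verify that these Shestakov--Umirbaev operations, computed inside $U(\fc)$, obey the recursion in the statement. The base case $m = 0$ is immediate: $\langle 1; y, z\rangle = -[y, z]$ in $U(\fc)$, and expanding $[y, z]$ via (\ref{eq:universal_property}) produces the claimed expression once the sign conventions forced by the passage to the opposite Lie algebra $E(\fc)\supopp$ are pinned down. For the inductive step I would expand
$\langle \underline{u x_{m+1}}; y, z\rangle = -p(\underline{u x_{m+1}}; y; z) + p(\underline{u x_{m+1}}; z; y)$
through the defining relation $(u, v, z) = (u\1 v\1) p(\underline{u}\2; \underline{v}\2; z)$, and then split the coproduct of $u x_{m+1}$ according to how the last factor $x_{m+1}$ distributes over the two Sweedler components. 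Three summands emerge, which should match the three on the right hand side of the recursion: the outer associator contribution yields $\langle \underline{u}; x_{m+1}, y \cdot z\rangle$ after collapsing the $\fc$-valued commutator $[y, z]$ via (\ref{eq:universal_property}); the iterated coproduct contribution yields the nested term $\langle \underline{u}\1; x_{m+1}, \langle \underline{u}\2; y, z\rangle\rangle$; and the lowest-degree stratum $\epsilon(u) \neq 0$ produces the triple product $[x_{m+1}; y, z]$ through the identity $[[R_a, R_b] + R_{a \cdot b}, R_c] = R_{[c; a, b]}$ of (\ref{eq:universal_property}).

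The main obstacle is the combinatorial bookkeeping in this splitting, in particular the need to invoke right monoalternativity repeatedly to reduce iterated right multiplications $R_{x_m} \cdots R_{x_1}$ acting on products $yz$ to associator expressions whose $\fc$-components can be read off from the triple algebra operations $\cdot$, $*$ and $[\,;\,]$. Once the identification of the three strata is carried out, the Sabinin algebra axioms for the operations defined by the recursion are inherited for free from the Sabinin structure of $\Prim(U(\fc))$ supplied by the Shestakov--Umirbaev theorem, finishing the proof.
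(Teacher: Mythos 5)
Your overall framework is exactly the paper's: integrate the triple algebra through the triple $(E(\fc)\supopp,\fs\supopp,\fc)$ to get a hyporeductive $U(\fc)$ with $\fc=\Prim(U(\fc))$ satisfying (\ref{eq:universal_property}), observe that monoalternativity kills the $\Phi$-operations so only the family $\langle -;-,-\rangle$ matters, and then argue that the Shestakov--Umirbaev operations of $U(\fc)$ obey the stated recursion, so the Sabinin axioms are inherited. The problem is that the inductive step --- which is the entire mathematical content of the theorem --- is left at the level of ``three summands emerge, which should match''; you even flag the bookkeeping as the main unresolved obstacle. In particular, your picture of obtaining the three terms of the recursion as three ``strata'' of the coproduct of $u x_{m+1}$ is not right as stated: the nested term $\langle \underline{u}\1; x_{m+1}, \langle \underline{u}\2; y,z\rangle\rangle$ does not come from a piece of $\Delta(u x_{m+1})$, but from a \emph{second} application of the defining relation of the operations to the term $(u\1\langle\underline{u}\2;y,z\rangle)x_{m+1}$, using that $\langle\underline{u}\2;y,z\rangle$ is again primitive. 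Without identifying that mechanism, the asserted matching is precisely what remains to be proved.

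For comparison, the paper's verification avoids expanding $p(\underline{u x_{m+1}};y;z)$ head-on. It applies the operator identity $[[R_y,R_z]+R_{y\cdot z},R_{x_{m+1}}]=R_{[x_{m+1};y,z]}$ from (\ref{eq:universal_property}) to a generic spanning element $u$, obtaining a six-term identity in $U(\fc)$; it then rewrites the two bracketed groups in terms of lower operations $u\1\langle\underline{u}\2;\cdot,\cdot\rangle$ and $(ux_{m+1})\1\langle\underline{(ux_{m+1})}\2;y,z\rangle$, re-expands the term containing the primitive element $\langle\underline{u}\2;y,z\rangle$ as above, and finally cancels the common left factor $u\1$ (connectedness/left division) to extract exactly the recursion; the base case is the commutator computation $\langle 1;y,z\rangle=-[y,z]$ decomposed via $[a,b]=a*b+a\cdot b$, where you also leave the sign bookkeeping from the passage to $E(\fc)\supopp$ unsettled. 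So: right strategy and right ingredients, but the key computation establishing the recursion is missing, and the route you sketch for it would need to be reorganized along these lines to go through.
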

\begin{proof}
Let $U(\fc)$ be the hyporeductive Hopf algebra that formally integrates $(\fc, a \cdot b, a *b, [c;a,b])$. We will compute the Shestakov-Umirbaev operations   (recall Section~\ref{subsec:SU_operations}) $\langle -; -,-\rangle$ of $\SU(U(\fc))$ to obtain the desired Sabinin algebra $(\fc, \langle - ; - , - \rangle)$.  By (\ref{eq:universal_property}) we have
\begin{equation*}
	\begin{split}
		&\{ ((u x_{m+1})z)y - ((u x_{m+1})y)z - (u x_{m+1})(y \cdot z)\}  \\ 
		&\quad - \{ ((u z)y) x_{m+1} - ((u y) z) x_{m+1} + (u(y \cdot z)) x_{m+1} \} = u[x_{m+1};y,z],
	\end{split}
\end{equation*}
for any $x_1,\dots, x_{m}, x_{m+1}, y, z \in \fc$, $u =((x_1x_2)\cdots)x_m$ and $\underline{u} = x_1 \otimes \cdots \otimes x_m$,  so
\begin{equation*}
	\begin{split}
		& (u x_{m+1})\1 \langle \underline{(u x_{m+1})}\2; y,z \rangle \\ 
		& \quad + u\1 \langle \underline{u}\2; y \cdot z, x_{m+1} \rangle - (u\1  \langle u\2; y, z\rangle) x_{m+1} = u[x_{m+1};y,z],
	\end{split}
\end{equation*}
which implies
\begin{equation*}
	\begin{split}
		& u\1 \langle \underline{u}\2; \langle \underline{u}\3; y,z \rangle, x_{m+1} \rangle \\
		& \quad  + u\1 \langle \underline{u\2 x_{m+1}}; y, z \rangle + u\1 \langle \underline{u}\2; y \cdot z, x_{m+1} \rangle  = u[x_{m+1};y,z].
	\end{split}
\end{equation*}
Dividing by $u\1$ we get
\begin{equation*}
\langle \underline{u x_{m+1}}; y,z \rangle = \langle \underline{u}; x_{m+1}, y \cdot z \rangle + \langle \underline{u}\1; x_{m+1}, \langle \underline{u}\2; y,z \rangle \rangle + \epsilon(u) [x_{m+1};y,z],
\end{equation*}
and, by definition of $\langle 1; y,z \rangle$ and the properties of $U(\fc)$, $\langle 1; y,z \rangle = - y * z - y \cdot z$. 
\end{proof}


\begin{bibdiv}
\begin{biblist}

\bib{BeElMa09}{article}{
	author={Benito, P.},
	author={Elduque, A.},
	author={Mart\'\i n-Herce, F.},
	title={Irreducible Lie-Yamaguti algebras},
	journal={J. Pure Appl. Algebra},
	volume={213},
	date={2009},
	number={5},
	pages={795--808},
	issn={0022-4049},
}

\bib{BeElMa11}{article}{
	author={Benito, P.},
	author={Elduque, A.},
	author={Mart\'\i n-Herce, F.},
	title={Irreducible Lie-Yamaguti algebras of generic type},
	journal={J. Pure Appl. Algebra},
	volume={215},
	date={2011},
	number={2},
	pages={108--130},
	issn={0022-4049},
}

\bib{GrPe17}{article}{
	author={Grishkov, A. N.},
	author={P\'erez-Izquierdo, J. M.},
	title={Lie's correspondence for commutative automorphic formal loops},
	eprint={arXiv:1612.03624v1},
}

\bib{Is95}{article}{
	author={Issa, A. N.},
	title={Notes on the geometry of smooth hyporeductive loops},
	journal={Algebras Groups Geom.},
	volume={12},
	date={1995},
	number={3},
	pages={223--246},
	issn={0741-9937},
}

\bib{Is10}{article}{
		author={Issa, A. N.},
		title={On enveloping Lie algebras of hyporeductive triple algebras},
		journal={Comm. Algebra},
		volume={38},
		date={2010},
		number={8},
		pages={3111--3124},
		issn={0092-7872},
}

\bib{Ki64}{article}{
	author={Kikkawa, M.},
	title={On local loops in affine manifolds},
	journal={J. Sci. Hiroshima Univ. Ser. A-I Math.},
	volume={28},
	date={1964},
	pages={199--207},
}

\bib{Ki75}{article}{
	author={Kikkawa, M.},
	title={Geometry of homogeneous Lie loops},
	journal={Hiroshima Math. J.},
	volume={5},
	date={1975},
	number={2},
	pages={141--179},
	issn={0018-2079},
}

\bib{Kin01}{article}{
	author={Kinyon, M. K.},
	author={Weinstein, A.},
	title={Leibniz algebras, Courant algebroids, and multiplications on
		reductive homogeneous spaces},
	journal={Amer. J. Math.},
	volume={123},
	date={2001},
	number={3},
	pages={525--550},
	issn={0002-9327},
}

\bib{No96}{book}{
	author={Kobayashi, S.},
	author={Nomizu, K.},
	title={Foundations of differential geometry. Vol. II},
	series={Wiley Classics Library},
	note={Reprint of the 1969 original;
		A Wiley-Interscience Publication},
	publisher={John Wiley \& Sons, Inc., New York},
	date={1996},
	pages={xvi+468},
}

\bib{Lo69}{book}{
	author={Loos, O.},
	title={Symmetric spaces. I: General theory},
	publisher={W. A. Benjamin, Inc., New York-Amsterdam},
	date={1969},
	pages={viii+198},
}

\bib{MoPe10}{article}{
	author={Mostovoy, J.},
	author={P\'erez-Izquierdo, J. M.},
	title={Formal multiplications, bialgebras of distributions and
		non-associative Lie theory},
	journal={Transform. Groups},
	volume={15},
	date={2010},
	number={3},
	pages={625--653},
	issn={1083-4362},
}

\bib{MoPeSh14}{article}{
	author={Mostovoy, J.},
	author={Perez-Izquierdo, J. M.},
	author={Shestakov, I. P.},
	title={Hopf algebras in non-associative Lie theory},
	journal={Bull. Math. Sci.},
	volume={4},
	date={2014},
	number={1},
	pages={129--173},
	issn={1664-3607},
}

\bib{MoPeSh16a}{article}{
	author={Mostovoy, J.},
	author={P\'erez-Izquierdo, J. M.},
	author={Shestakov, I. P.},
	title={A non-associative Baker-Campbell-Hausdorff formula},
	journal={To appear in Proc. Amer. Math. Soc.},
	eprint={arXiv:1605.00953v1},
}

\bib{MoPeSh16b}{article}{
	author={Mostovoy, J.},
	author={P\'erez-Izquierdo, J. M.},
	author={Shestakov, I. P.},
	title={On torsion-free nilpotent loops},
	eprint={arXiv:1610.06663v1},
}

\bib{No54}{article}{
	author={Nomizu, K.},
	title={Invariant affine connections on homogeneous spaces},
	journal={Amer. J. Math.},
	volume={76},
	date={1954},
	pages={33--65},
	issn={0002-9327},
}

\bib{Pe07}{article}{
   author={P\'erez-Izquierdo, J. M.},
   title={Algebras, hyperalgebras, non-associative bialgebras and loops},
   journal={Adv. Math.},
   volume={208},
   date={2007},
   number={2},
   pages={834--876},
   issn={0001-8708},
}

\bib{Sa77}{article}{
	author={Sabinin, L. V.},
	title={Odules as a new approach to geometry with a connection},
	language={Russian},
	journal={Dokl. Akad. Nauk SSSR},
	volume={233},
	date={1977},
	number={5},
	pages={800--803},
	issn={0002-3264},
}

\bib{Sa90a}{article}{
	author={Sabinin, L. V.},
	title={Smooth hypo-reductive loops},
	language={Russian},
	conference={
		title={Variational methods in modern geometry (Russian)},
	},
	book={
		publisher={Univ. Druzhby Narodov, Moscow},
	},
	date={1990},
	pages={50--69},
}

\bib{Sa90b}{article}{
	author={Sabinin, L. V.},
	title={Smooth hyporeductive loops},
	language={Russian},
	journal={Dokl. Akad. Nauk SSSR},
	volume={314},
	date={1990},
	number={3},
	pages={565--568},
	issn={0002-3264},
	translation={
		journal={Soviet Math. Dokl.},
		volume={42},
		date={1991},
		number={2},
		pages={524--526},
		issn={0197-6788},
	},
}

\bib{Sa91}{article}{
	author={Sabinin, L. V.},
	title={Smooth hyporeductive loops},
	conference={
		title={Webs and quasigroups},
		address={Moscow},
		date={1989},
	},
	book={
		publisher={Tver. Gos. Univ., Tver$\prime$},
	},
	date={1991},
	pages={129--137},
}

\bib{Sa96}{article}{
	author={Sabinin, L. V.},
	title={The theory of smooth hyporeductive and pseudoreductive loops},
	journal={Algebras Groups Geom.},
	volume={13},
	date={1996},
	number={1},
	pages={1--24},
	issn={0741-9937},
}

\bib{Sa99}{book}{
   author={Sabinin, L. V.},
   title={Smooth quasigroups and loops},
   series={Mathematics and its Applications},
   volume={492},
   publisher={Kluwer Academic Publishers, Dordrecht},
   date={1999},
   pages={xvi+249},
   isbn={0-7923-5920-8},
}

\bib{SaMi87}{article}{
	author={Sabinin, L. V.},
	author={Mikheev, P. O.},
	title={Infinitesimal theory of local analytic loops},
	language={Russian},
	journal={Dokl. Akad. Nauk SSSR},
	volume={297},
	date={1987},
	number={4},
	pages={801--804},
	issn={0002-3264},
	translation={
		journal={Soviet Math. Dokl.},
		volume={36},
		date={1988},
		number={3},
		pages={545--548},
		issn={0197-6788},
	},
	review={\MR{924255}},
}

\bib{SU02}{article}{
	author={Shestakov, I. P.},
	author={Umirbaev, U. U.},
	title={Free Akivis algebras, primitive elements, and hyperalgebras},
	journal={J. Algebra},
	volume={250},
	date={2002},
	number={2},
	pages={533--548},
	issn={0021-8693},
}

\bib{Sw69}{book}{
   author={Sweedler, M. E.},
   title={Hopf algebras},
   series={Mathematics Lecture Note Series},
   publisher={W. A. Benjamin, Inc., New York},
   date={1969},
   pages={vii+336},
}

\bib{Wo68}{article}{
	author={Wolf, J. A.},
	title={The goemetry and structure of isotropy irreducible homogeneous
		spaces},
	journal={Acta Math.},
	volume={120},
	date={1968},
	pages={59--148},
	issn={0001-5962},
}

\bib{Ya58}{article}{
	author={Yamaguti, K.},
	title={On the Lie triple system and its generalization},
	journal={J. Sci. Hiroshima Univ. Ser. A},
	volume={21},
	date={1957/1958},
	pages={155--160},
}
\end{biblist}
\end{bibdiv}
\end{document}